\newtheorem{lemma}{Lemma}
\newtheorem{theorem}{Theorem}
\newtheorem{corollary}{Corollary}
\newtheorem{claim}{Claim}
\newtheorem{remark}{Remark}
\newtheorem{proposition}{Proposition}
\newtheorem{question}{Question}
\newcommand{\ds}{\displaystyle}
\newcommand{\lp}{\left (}
\newcommand{\rp}{\right )}
\newcommand{\cH}{\mathcal{H}}
\newcommand{\ex}{\hat{\rm{ex}}}
\newcommand{\td}{\hat{\pi}}
\newcommand{\hh}{\hat{\mathcal{H}}}
\newcommand{\hd}{\hat{d}}
\DeclarePairedDelimiter{\abs}{\lvert}{\rvert}%
\newcommand{\Prob}[1]{\Pr\lp#1\rp}
\title{On the cover Tur\'an number of Berge hypergraphs}
\author{
Linyuan Lu
\thanks{University of South Carolina, Columbia, SC 29208,
({\tt lu@math.sc.edu}). This author was supported in part by NSF grant DMS-1600811.} \and
Zhiyu Wang \thanks{University of South Carolina, Columbia, SC 29208,
({\tt zhiyuw@math.sc.edu}).} 
}
\begin{document}

\maketitle

\begin{abstract}
For a fixed set of positive integers $R$, we say $\cH$ is an $R$-uniform hypergraph, or $R$-graph, if the cardinality of each edge belongs to $R$. For a graph $G=(V,E)$, a hypergraph $\mathcal{H}$ is called a \textit{Berge}-$G$, denoted by $BG$, if there is
an injection $i\colon V(G)\to V(\mathcal{H})$ and 
a bijection $f\colon E(G) \to E(\mathcal{H})$ such that for all $e=uv \in E(G)$, we have $\{i(u), i(v)\} \subseteq f(e)$.
In this paper, we define a variant of Tur\'an number in hypergraphs, namely the \emph{cover Tur\'an number}, denoted as $\hat{ex}_R(n, G)$, as the maximum number of edges in the shadow graph of a Berge-$G$ free $R$-graph on $n$ vertices. We show a general upper bound on the cover Tur\'an number of graphs and determine the cover Tur\'an density of all graphs when the uniformity of the host hypergraph equals to $3$. 
\end{abstract}

\section{Introduction}
A hypergraph is a pair $\cH=(V,E)$ where $V$ is a vertex set and $E\subseteq 2^V$ is an edge set. For a fixed set of positive integers $R$, we say $\cH$ is an $R$-uniform hypergraph, or $R$-graph for short, if the cardinality of each edge belongs to $R$. 
If $R=\{k\}$, then an $R$-graph is simply a $k$-uniform hypergraph or a $k$-graph.
Given an $R$-graph $\cH = (V,E)$ and a set $S \in \binom{V}{s}$, let $d(S)$ denote the number of edges containing $S$ and $\delta_s(\cH)$ be the minimum $s$-degree of $\cH$, i.e., the minimum of $d(S)$ over all $s$-element sets $S \in \binom{V}{s}$. When $s = 2$, $\delta_2(\cH)$ is also called the minimum \textit{co-degree} of $\cH$. Given a hypergraph $\cH$, the \textit{$2$-shadow}(or \textit{shadow}) of $\cH$, denoted by $\partial(\cH)$, is a simple $2$-uniform graph $G=(V,E)$ such that $V(G)=V(\cH)$ and $uv\in E(G)$ if and only if $\{u,v\}\subseteq h$ for some $h\in E(\cH)$. Note that $\delta_2(\cH) \geq 1$ if and only if $\partial(\cH)$ is a complete graph. In this case, we say $\cH$ is {\em covering}.

There are several notions of a path or a cycle in hypergraphs. A \textit{Berge path} of length $t$ is a collection of $t$ hyperedges $h_1, h_2, \ldots, h_{t} \in E$ and $t+1$ vertices $v_1, \ldots, v_{t+1}$ such that $\{v_i, v_{i+1}\} \subseteq h_i$ for each $i\in [t]$. Similarly, a $k$-graph $\cH = (V,E)$ is called a \textit{Berge} cycle of length $t$ if $E$ consists of $t$ distinct edges $h_1, h_2, \ldots, h_t$ and $V$ contains $t$ distinct vertices $v_1, v_2, \ldots, v_t$ such that $\{v_i, v_{i+1}\} \subseteq h_i$ for every $i\in [t]$ where $v_{t+1} \equiv v_1$. Note that there may be other vertices than $v_1, \ldots, v_t$ in the edges of a Berge cycle or path. Gerbner and Palmer \cite{GP} extended the definition of Berge paths and Berge cycles to general graphs. In particular, given  a simple graph $G$, a hypergraph $\cH$ is called  \emph{Berge-$G$} if there is
an injection $i\colon V(G)\to V(\cH)$ and 
a bijection $f\colon E(G) \to E(\cH)$ such that for all $e=uv \in E(G)$, we have $\{i(u), i(v)\} \subseteq f(e)$.

We say an $R$-graph $\cH$ on $n$ vertices contains a \textit{Hamiltonian Berge cycle (path)} if it contains a Berge cycle (path) of length $n$ (or $n-1$). We say $\cH$ is {\em Berge-Hamiltonian} if it contains a Hamiltonian Berge cycle. Bermond, Germa, Heydemann, and Sotteau \cite{BGHS} showed a Dirac-type theorem for Berge cycles. We showed in \cite{LW-codegree} that for every finite set $R$ of positive integers, there is an integer $n_0=n_0(R)$ such that every covering $R$-uniform hypergraph $\cH$ on $n$ ($n\geq n_0$) vertices contains a Berge cycle $C_s$ for any $3\leq s\leq n$. In particular, every covering $R$-graph on sufficiently large $n$ vertices is Berge-Hamiltonian.

Extremal problems related to Berge hypergraphs have been receiving increasing attention lately. For Tur\'an-type results, let $ex_k(n,G)$ denote the maximum number of hyperedges in $k$-uniform Berge-$G$-free hypergraph.
Gy\H{o}ri, Katona and Lemons \cite{GKL} showed that for a $k$-graph $\cH$ containing no Berge path of length $t$, if $t\geq k+2 \geq 5$, then $e(\cH) \leq \frac{n}{t}\binom{t}{k}$; if $3\leq t\leq k$, then $e(\cH) \leq \frac{n(t-1)}{k+1}$. Both bounds are sharp. The remaining case of $t=k+1$ was settled by Davoodi, Gy\H{o}ri, Methuku and Tompkins \cite{Davoodi}. For cycles of a given length, Gy\H{o}ri and Lemons \cite{GL1, GL2} showed that $ex_k(n, C_{2t}) = \Theta(n^{1+1/t})$. The same asymptotic upper bound holds for odd cycles of length $2t+1$ as well. The problem of avoiding all Berge cycles of length at least $k$ has been investigated in a series of papers \cite{KL, FKL, FKL2, Ergemlidze, GLSZ}. For general results on the maximum size of a Berge-$G$-free hypergraph for an arbitrary graph $G$, see for example \cite{GMP, GMT, PTTW}.

For Ramsey-type results, define $R_c^k(BG_1, \ldots, BG_c)$ as the smallest integer $n$ such that  for any $c$-edge-coloring of a complete $k$-uniform hypergraph on $n$ vertices, there exists a Berge-$G_i$ subhypergraph with color $i$ for some $i$. Salia, Tompkins, Wang and Zamora \cite{STWZ} showed that $R_2^3(BK_s, BK_t) = s+t -3$ for $s,t \geq 4$ and $\max(s,t) \geq 5$. For higher uniformity, they showed that $R^4(BK_t, BK_t) = t+1$ for $t \geq 6$ and $R_2^k(BK_t, BK_t)=t$ for $k\geq 5$ and $t$ sufficiently large.
Independently and more generally, Gerbner, Methuku, Omidi and Vizer \cite{GMOV} showed that $R_c^k(BK_n) = n$ if $k > 2c$; $R_c^k(BK_n) = n+1$ if $k = 2c$ and obtained various bounds on $R_c^k(BK_n)$ when $k < 2c$. Similar investigations have also been started independently by Axenovich and Gy\'arf\'as \cite{AG} who focus on the Ramsey number of small fixed graphs where the number of colors may go to infinity. 

Very recently, we \cite{LW-cRamsey} defined a new type of Ramsey number, namely the \emph{cover Ramsey number}, denoted as $\hat{R}^R(BG_1, BG_2)$, as the smallest integer $n_0$ such that for every covering $R$-uniform hypergraph $\mathcal{H}$ on $n \geq n_0$ vertices and every $2$-edge-coloring (blue and red) of $\mathcal{H}$, there is either a blue Berge-$G_1$ or a red Berge-$G_2$ subhypergraph. We show that for every $k\geq 2$, $R(G_1, G_2) \leq \hat{R}^{[k]}(BG_1, BG_2) \leq c_k \cdot R(G_1, G_2)^3$ for some $c_k$. Moreover, $R^{\{k\}}(K_t) >  (1+o(1))\frac{\sqrt{2}}{e} t2^{t/2}$ for sufficiently large $t$ and $\hat{R}^{[k]}(BG,BG) \leq c(d,k)n$ if $\Delta(G)\leq d$. It occurs to us that the cover Ramsey number for Berge hypergraphs behaves more like the classical Ramsey number than the Ramsey number of Berge hypergraphs defined in \cite{AG, STWZ, GMOV}. This inspires us to extend the investigation to the analogous \emph{cover Tur\'an number} for Berge hypergraphs. In particular, given a fixed graph $G$ and a finite set of positive integers $R \subseteq [k]$, we define the \emph{$R$-cover Tur\'an number} of $G$, denoted as \emph{$\ex_R(n, G)$}, as the maximum number of edges in the shadow graph of a Berge-$G$-free $R$-graph on $n$ vertices. The \emph{$R$-cover Tur\'an density}, denoted as $\td_R(G)$, is defined as 
\[\td_R(G) = \ds\limsup_{n\to \infty} \frac{\ex_R(n, G)}{\binom{n}{2}}.\]
When $R$ is clear from the context, we ignore $R$ and use \emph{cover Tur\'an number} and \emph{cover Tur\'an density} for short. 
 A graph is called \textit{$R$-degenerate} if $\td_R(G) = 0$. For the ease of reference, when $R=\{k\}$, we simply denote $\td_R(G)$ as $\td_k(G)$ and call $G$ \textit{$k$-degenerate} if $\td_{\{k\}}(G) = 0$.

We remark that the Tur\'an number of graphs only differ by a constant factor when the host hypergraph is uniform compared to non-uniform. In particular, we show the following proposition.

\begin{proposition}\label{prop:uniform-versus-non}
If $R$ is a finite set of positive integers such that $min(R) = m \geq 2$ and $max(R) = M$. Then given a fixed graph $G$,
$$\max_{r\in R}\ex_r(n,G) \leq \ex_{R}(n,G) \leq \frac{\binom{M}{2}}{\binom{m}{2}} \ex_m(n,G).$$
\end{proposition}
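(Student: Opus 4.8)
The plan is to prove the two inequalities separately, with the right‑hand bound being the substantive one. The lower bound $\max_{r\in R}\ex_r(n,G) \leq \ex_R(n,G)$ is essentially by definition: for any fixed $r\in R$, take an $r$-uniform Berge-$G$-free hypergraph $\cH_0$ on $n$ vertices whose shadow has $\ex_r(n,G)$ edges. Since $r\in R$, every edge of $\cH_0$ has size in $R$, so $\cH_0$ is itself a Berge-$G$-free $R$-graph and is therefore admissible in the definition of $\ex_R(n,G)$. Hence $\ex_R(n,G)\geq e(\partial(\cH_0))=\ex_r(n,G)$, and taking the maximum over $r\in R$ finishes this direction. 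No real work is needed here.

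For the upper bound I would use a \emph{shrinking-plus-averaging} argument. Let $\cH$ be a Berge-$G$-free $R$-graph on $n$ vertices attaining $e(\partial(\cH))=\ex_R(n,G)$. For each hyperedge $h\in E(\cH)$ (which has size $r_h$ with $m\leq r_h\leq M$) choose a uniformly random $m$-element subset $S_h\subseteq h$, independently across edges, and form the $m$-uniform hypergraph $\cH^*=\{S_h : h\in E(\cH)\}$. I then need to verify two things: that $\cH^*$ is still Berge-$G$-free, and that $\partial(\cH^*)$ is large in expectation.

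Freeness follows from a lifting argument. Suppose $\cH^*$ contained a Berge-$G$ witnessed by an injection $i$ and a bijection $f$ onto its edges. Each used edge is some $S_h$, and since the chosen edges of $\cH^*$ are pairwise distinct, choosing a preimage $h\in E(\cH)$ with $S_h=f(e)$ yields pairwise distinct hyperedges of $\cH$ (equal preimages would force equal $S_h$); as $\{i(u),i(v)\}\subseteq S_h\subseteq h$, this lifts to a Berge-$G$ in $\cH$, a contradiction. For the shadow, note $\partial(\cH^*)\subseteq\partial(\cH)$ because $S_h\subseteq h$, so it suffices to bound the survival probability of each pair $\{u,v\}\in\partial(\cH)$. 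Fixing one hyperedge $h$ containing $\{u,v\}$, survival is at least $\Pr[\{u,v\}\subseteq S_h]=\binom{r_h-2}{m-2}/\binom{r_h}{m}=\binom{m}{2}/\binom{r_h}{2}\geq \binom{m}{2}/\binom{M}{2}$. Linearity of expectation then gives $\E[e(\partial(\cH^*))]\geq \frac{\binom{m}{2}}{\binom{M}{2}}\,e(\partial(\cH))$, so some realization is an $m$-uniform Berge-$G$-free hypergraph with $e(\partial(\cH^*))\geq \frac{\binom{m}{2}}{\binom{M}{2}}\ex_R(n,G)$. Since $\ex_m(n,G)\geq e(\partial(\cH^*))$, rearranging yields $\ex_R(n,G)\leq \frac{\binom{M}{2}}{\binom{m}{2}}\ex_m(n,G)$.

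I expect the main obstacle to be the Berge-$G$-freeness of $\cH^*$ rather than the counting. The delicate point is that one must select exactly \emph{one} $m$-subset per hyperedge: this is what keeps the preimage map on the used edges injective, so that a Berge-$G$ in $\cH^*$ genuinely lifts to one in $\cH$. Two features require care but turn out to help rather than hurt: distinct edges of $\cH$ may shrink to the same $m$-set (reducing $e(\cH^*)$, which is harmless for the shadow bound and cannot manufacture a new Berge-$G$), and the independence of the choices is never actually needed for the per-pair bound, since survival in a single fixed covering edge already suffices. Once the per-pair survival estimate $\binom{m}{2}/\binom{r_h}{2}\geq\binom{m}{2}/\binom{M}{2}$ is isolated, the averaging step is routine.
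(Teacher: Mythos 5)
Your proposal is correct and follows essentially the same route as the paper: the first inequality by definition, and the second by randomly shrinking each hyperedge to an $m$-subset, using the per-pair survival probability $\binom{m}{2}/\binom{M}{2}$ and linearity of expectation, then lifting any Berge-$G$ back to $\cH$. Your added care about distinct preimages under the shrinking map is a point the paper glosses over, but it is the same argument.
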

Indeed, the first inequality is clear from definition. For the second inequality, suppose we have an $R$-graph $\cH$ with more than $\binom{M}{2}/\binom{m}{2} \cdot \ex_m(n,G)$ edges in its shadow. 
For each hyperedge $h$ in $\cH$, shrink it to a hyperedge of size $m$ by uniformly and randomly picking $m$ vertices in $h$. Call the resulting hypergraph $\cH'$. It is easy to see that for any edge $e \in E(\partial(\cH))$, $\Prob{e \in E(\partial(\cH'))} \geq \binom{m}{2}/\binom{M}{2}$. Hence by linearity of expectation, the expected number of edges in $\partial(\cH')$ is more than $\ex_m(n,G)$. It follows that there exists a way to shrink $\cH$ to a $m$-graph with at least $(\ex_m(n,G)+1)$ edges in its shadow. Thus, by definition of the cover Tur\'an number, $\cH'$ contains a Berge copy of $G$, which corresponds to a Berge-$G$ in $\cH$.

\begin{remark}
Note that Proposition \ref{prop:uniform-versus-non} implies that if a graph $G$ is $k$-degenerate (where $k\geq 2$), then it is $R$-degenerate for any finite set $R$ satisfying $min(R) \geq k$. In particular, a bipartite graph is $k$-degenerate for all $k\geq 2$.
\end{remark}

In this paper, we determine the cover Tur\'an density of all graphs when the uniformity of the host graph equals to $3$. 
We first establish a general upper bound for the cover Tur\'an density of graphs.

\begin{theorem}\label{thm:general-upper}
For any fixed graph $G$ and any fixed $\epsilon > 0$, there exists $n_0$ such that for any $n\geq n_0$, 
$$\ex_{k}(n,G) \leq \lp  1- \frac{1}{\chi(G)-1} + \epsilon \rp \binom{n}{2}.$$
\end{theorem}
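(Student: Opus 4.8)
The plan is to reduce the statement to a purely combinatorial ``lifting'' problem and then feed it the Erd\H{o}s--Stone--Simonovits theorem. Suppose $\cH$ is a $k$-graph on $n$ vertices whose shadow $\partial(\cH)$ has more than $\bigl(1-\tfrac{1}{\chi(G)-1}+\epsilon\bigr)\binom n2$ edges; I must produce a Berge-$G$ in $\cH$. Writing $\chi=\chi(G)$, the Erd\H{o}s--Stone--Simonovits theorem says that for every fixed $s$ there is an $n_0$ so that any graph on $n\ge n_0$ vertices of this density contains the complete $\chi$-partite graph $K_\chi(s)$ with parts of size $s$. Since a proper $\chi$-colouring of $G$ embeds $G$ into $K_\chi(|V(G)|)$, the $t$-blow-up $G(t)$ (replace each vertex by $t$ copies and each edge by a complete bipartite graph) satisfies $G(t)\subseteq K_\chi(|V(G)|\,t)$. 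Hence, taking $s=|V(G)|\,t$, the hypothesis forces $G(t)\subseteq\partial(\cH)$ for any fixed $t$ once $n$ is large. So it suffices to prove a lifting lemma: there is a constant $t_0=t_0(G,k)$ such that $G(t_0)\subseteq\partial(\cH)$ already guarantees a Berge-$G$ inside $\cH$.

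For the lifting lemma, let the blobs of $G(t_0)$ be $W_v$ ($v\in V(G)$), each of size $t_0$ and pairwise disjoint. A choice of one representative $x_v\in W_v$ for every $v$ (a transversal) is automatically an injective embedding of $G$ into $\partial(\cH)$, since $x_u x_v$ is a shadow edge whenever $uv\in E(G)$. To upgrade such an embedding to a Berge-$G$ I must assign to each edge $e=uv$ a hyperedge $h_e\supseteq\{x_u,x_v\}$ with all $h_e$ distinct; that is, the sets $S_e=\{h\in E(\cH):\{x_u,x_v\}\subseteq h\}$ must admit a system of distinct representatives. Each $S_e$ is nonempty, so by Hall's theorem an SDR fails only if some $J\subseteq E(G)$ has $|\bigcup_{e\in J}S_e|<|J|$; the $S_e$ ($e\in J$) are then not pairwise disjoint, so some hyperedge lies in two of them and therefore contains three (or four) of the chosen vertices. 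Thus a transversal lifts to a Berge-$G$ as soon as no hyperedge of $\cH$ contains three of its representatives.

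The hard part is that this last condition is far too strong when $\cH$ is dense: if $\cH$ has near-maximal size then essentially every triple of vertices lies in a common hyperedge, yet a Berge-$G$ still exists because every $S_e$ is enormous. The fix is to count ``bad'' transversals using Hall more carefully, exploiting exactly this abundance. If the SDR fails via $J$, then $S_e\subseteq\bigcup_{e'\in J}S_{e'}$ forces $|S_e|\le|J|-1\le|E(G)|-1$ for every $e\in J$, so there is an edge $e_0=uw$ whose representative pair has codegree $d(\{x_u,x_w\})<|E(G)|$, together with a colliding edge $e'=u'w'$ and a hyperedge $h\in S_{e_0}\cap S_{e'}$, whence $\{x_u,x_w,x_{u'},x_{w'}\}\subseteq h$. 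Here is the point: once $x_u,x_w$ are fixed, the fewer than $|E(G)|$ hyperedges through this pair confine $x_{u'},x_{w'}$ to a set of at most $|E(G)|\cdot k$ vertices, a constant. Summing over the $O(1)$ choices of $e,e'$ and the at most $(|E(G)|\,k)^2$ choices of $x_{u'},x_{w'}$, while $x_u,x_w$ and the untouched blobs range freely, the number of bad transversals is at most $C(G,k)\,t_0^{\,|V(G)|-1}$. This is smaller than the total number $t_0^{\,|V(G)|}$ of transversals once $t_0>C(G,k)$, so a good transversal --- hence a Berge-$G$ --- exists, completing the lifting lemma and with it the theorem. I expect the bookkeeping in this counting step (pinning down the collision structure and fixing the explicit constant $t_0$) to be the only real obstacle; the reduction via Erd\H{o}s--Stone is routine.
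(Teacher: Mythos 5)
Your proposal is correct, and the key step is carried out by a genuinely different mechanism than in the paper. Both arguments begin the same way, invoking Erd\H{o}s--Stone--Simonovits to find a large blow-up of $G$ inside the shadow and reducing the theorem to a lifting lemma. The paper, however, prepares the ground first: it passes to an edge-minimal $\cH$ (so that $|E(\cH)|\leq\binom{n}{2}$), deletes from the shadow every pair of co-degree at least $k^2/\epsilon$, shows by double counting that this loses only $\tfrac{\epsilon}{2}\binom{n}{2}$ edges, applies Erd\H{o}s--Stone to the cleaned shadow, and then embeds $G$ greedily one vertex at a time, using the uniform co-degree bound $M$ to show that each blob of size $Mv(G)^2k$ contains a usable representative. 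You skip the preprocessing entirely and instead analyze when a transversal of the blobs fails to lift: by Hall's theorem a failure forces every edge of the violating set $J$ to have co-degree below $|E(G)|$ at its representative pair, and such a low-co-degree pair confines the colliding representatives to a set of $O_{G,k}(1)$ vertices, so a union bound over the $t_0^{|V(G)|}$ transversals leaves a good one once $t_0>C(G,k)$. (Your exponent $t_0^{|V(G)|-1}$ is the correct worst case, attained when the two colliding edges of $G$ share a vertex.) The trade-off: your route needs no edge-minimality assumption and no co-degree cleaning, and your blow-up factor $t_0$ depends only on $G$ and $k$ rather than on $\epsilon$; the paper's greedy embedding is more explicitly constructive, but only works because the cleaning step guarantees that every already-embedded pair has bounded co-degree. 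Both are complete proofs of the stated bound.
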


We remark that Theorem \ref{thm:general-upper} holds when the host hypergraph is non-uniform as well, i.e. we can replace $k$ with any fixed finite set of positive integers $R$.
If $\chi(G)>k$, there is a construction giving the matching lower bound. Partition the vertex set into $t:=\chi(G)-1$ equitable parts $V=V_1\cup V_2 \cup \cdots \cup V_t$. Let $\cH$ be the $k$-uniform hypergraph on the vertex set $V$ consisting of all $k$-tuples intersecting each $V_i$ on at most one vertex.  
The shadow graph is simply the Tur\'an graph with 
$ (1- \frac{1}{\chi(G)-1}+o(1))\binom{n}{2}$ edges.
The shadow graph is $K_{t+1}$-free, thus contains no subgraph $G$. It follows that $\cH$ is Berge-$G$-free. Therefore, we have the following theorem:

\begin{theorem}\label{thm:general}
For any $k\geq 2$, and any fixed graph $G$ with
$\chi(G)\geq k+1$, we have
$$\td_{k}(G) = 1- \frac{1}{\chi(G)-1}.$$
\end{theorem}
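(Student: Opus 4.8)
\emph{Proof proposal.} The plan is to establish the two matching bounds separately and then combine them. Since we may let $\epsilon \to 0$ in Theorem \ref{thm:general-upper}, that theorem immediately yields the upper bound $\td_k(G) \le 1 - \frac{1}{\chi(G)-1}$, so the only remaining work is to produce a matching lower-bound construction and to verify that it is genuinely Berge-$G$-free. (The passage from the per-$n$ estimates to the $\limsup$ defining $\td_k(G)$ is routine and I would not dwell on it.)

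For the lower bound I would use exactly the Tur\'an-type hypergraph sketched above: set $t = \chi(G)-1$, equitably partition the vertex set as $V = V_1 \cup \cdots \cup V_t$, and let $\cH$ consist of all $k$-subsets meeting each part in at most one vertex. The hypothesis $\chi(G) \ge k+1$, i.e.\ $t \ge k$, is precisely what guarantees that $\cH$ has any edges at all, since one may choose $k$ of the $t$ parts and one vertex from each. First I would pin down the shadow exactly: a pair $\{u,v\}$ lies in $\partial(\cH)$ if and only if it extends to such a $k$-set, which happens if and only if $u$ and $v$ lie in distinct parts (here $t \ge k$ is used to fill in the remaining $k-2$ vertices). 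Hence $\partial(\cH)$ is precisely the complete $t$-partite Tur\'an graph $T(n,t)$, carrying $\lp 1 - \frac{1}{t} + o(1) \rp \binom{n}{2}$ edges.

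The key step---and the one place where a little care is needed---is to argue that $\cH$ is Berge-$G$-free, not merely that its shadow omits $G$ as a subgraph. Here I would record the general principle that any Berge-$G$ subhypergraph forces a subgraph copy of $G$ in the shadow: given an injection $i$ and a bijection $f$ witnessing a Berge-$G$, every edge $uv \in E(G)$ satisfies $\{i(u),i(v)\} \subseteq f(uv)$, so $i(u)i(v) \in E(\partial(\cH))$, and thus $i$ embeds $G$ into $\partial(\cH)$. Since $\partial(\cH) = T(n,t)$ has chromatic number $t < t+1 = \chi(G)$, it contains no copy of $G$; therefore $\cH$ contains no Berge-$G$. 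This yields $\ex_k(n,G) \ge e(T(n,t)) = \lp 1 - \frac{1}{\chi(G)-1} + o(1) \rp \binom{n}{2}$, and hence the lower bound $\td_k(G) \ge 1 - \frac{1}{\chi(G)-1}$.

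Combining the two bounds gives the claimed equality. I do not anticipate a serious obstacle, since the analytic heavy lifting is isolated entirely in Theorem \ref{thm:general-upper}; the only things to watch are keeping the roles of $\chi(G)$ and $t = \chi(G)-1$ straight, and confirming that the shadow-to-Berge implication is used in the correct direction---a $G$-subgraph in the shadow is a \emph{necessary} consequence of a Berge-$G$, so its absence (guaranteed by $T(n,t)$ being $K_{t+1}$-free, as $\chi(G) = t+1$) is exactly what rules out any Berge-$G$ and validates the construction.
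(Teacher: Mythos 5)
Your proposal is correct and follows essentially the same route as the paper: the upper bound is quoted from Theorem \ref{thm:general-upper}, and the lower bound comes from the same complete $t$-partite construction with $t=\chi(G)-1$, whose shadow is the Tur\'an graph $T(n,t)$ and hence $K_{t+1}$-free, so no Berge-$G$ can exist. Your additional remarks---that $t\ge k$ is what makes the construction nonempty and determines the shadow exactly, and that a Berge-$G$ always projects to a copy of $G$ in the shadow---are exactly the (implicit) justifications the paper relies on.
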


Given a simple graph $G$ on $n$ vertices $v_1, \ldots, v_n$ and a sequence of $n$ positive integers $s_1, \ldots, s_n$, we denote $B = G(s_1, \ldots, s_n)$ the $(s_1, \ldots, s_n)$-\emph{blowup} of $G$ obtained by replacing every vertex $v_i\in G$ with an independent set $I_i$ of $s_i$ vertices, and by replacing every edge $(v_i, v_j)$ of $G$ with a complete bipartite graph connecting the independent sets $I_i$ and $I_j$. If $s = s_1 = s_2 = \cdots = s_n$, we simply write $G(s_1, \ldots, s_n)$ as $G(s)$ where $s$ is called the \emph{blowup factor}.
We also define a \emph{generalized blowup} of $G$, denoted by $G(s_1, \ldots, s_n; M)$ where $M \subseteq E(G) \subseteq \binom{[n]}{2}$, as the graph obtained by replacing every vertex $v_i\in G$ with an independent set $I_i$ of $s_i$ vertices, and by replacing every edge $(v_i, v_j)$ of $E(G)\backslash M$ with a complete bipartite graph connecting $I_i$ and $I_j$ and replacing every edge $(v_i, v_j) \in M$ with a maximal matching connecting $I_i$ and $I_j$. When $M = \emptyset$, we simply write $G(s_1, \ldots, s_n; M)$ as the standard blowup $G(s_1, \ldots, s_n)$.

We first want to characterize the class of degenerate graphs when the host hypergraph is $3$-uniform. Observe that $\ex_k(n,G)\leq \binom{k}{2} ex_k(n, G)$. This implies that any graph $G$ satisfying $ex_k(n,G) = o(n^2)$ is $k$-degenerate. In particular, by results of \cite{GL1, GL2, GP, PTTW}, any cycles of fixed length at least $4$ and $K_{2,t}$ are $3$-degenerate. For triangles, Gr\'osz, Methuku and Tompkins \cite{GMT} showed that the uniformity threshold of a triangle is $5$, which implies that $C_3$ is $5$-degenerate. Moreover, there are constructions which show that $C_3$ is not $3$-degenerate or $4$-degenerate. For $K_{s,t}$ where $s,t\geq 3$, it is shown \cite{PTTW, GMT, AS} that $ex_r(n,K_{s,t}) = \Theta(n^{r-\frac{r(r-1)}{2s}})$. Thus in this case, the corresponding results on Berge Tur\'an number do not imply the degeneracy of $K_{s,t}$ in the cover Tur\'an density. 

In this paper, 
we classify all degenerate graphs when the host hypergraph is $3$-uniform.

\begin{theorem}\label{thm:degenerate}
Given a simple graph $G$,
$\td_3(G) = 0$ if and only if $G$ satisfies both of the following conditions:
\begin{enumerate}[(1)]
    \item $G$ is triangle-free, and there exists an induced bipartite subgraph $B \subseteq G$ such that $V(G)-V(B)$ is a single vertex.
    
    \item There exists a bipartite subgraph $B \subseteq G$ such that $E(G)-E(B)$ is a matching (possibly empty) in one of the partitions of $B$.
\end{enumerate}

\end{theorem}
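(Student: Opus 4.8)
The plan is to prove the two implications separately, organized around two explicit ``extremal'' $3$-graphs that exactly witness the failure of each condition. Throughout I use the basic fact that the shadow $\partial(\mathcal H)$ of any nonempty $3$-graph is a union of triangles (one per hyperedge), and that a Berge-$G$ in $\mathcal H$ projects to an honest copy of $G$ in $\partial(\mathcal H)$; hence $\mathcal H$ is Berge-$G$-free whenever $G\not\subseteq\partial(\mathcal H)$, while if $G$ contains a triangle then any Berge-$G$ contains a Berge-$C_3$. For the forward direction (necessity of the conditions) I exhibit, whenever (1) or (2) fails, a Berge-$G$-free $3$-graph whose shadow has $\Omega(n^2)$ edges, so that $\td_3(G)>0$. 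For the reverse direction I show that if $G$ satisfies both (1) and (2) then every $3$-graph with $\ge\epsilon\binom{n}{2}$ shadow edges already contains a Berge-$G$, which gives $\td_3(G)=0$.

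\textbf{Necessity (constructions).} Let $A,B$ be disjoint sets of size $\approx n/2$ and fix a perfect matching $M_A$ on $A$. The first construction $\mathcal H_1$ has hyperedges $\{a,a',b\}$ for every $aa'\in M_A$ and every $b\in B$; one checks $\partial(\mathcal H_1)=K_{A,B}\cup M_A$, so that $G\subseteq\partial(\mathcal H_1)$ if and only if $V(G)$ splits as an independent set (mapped into $B$) together with a set inducing a matching (mapped into $A$) --- that is, exactly condition (2). Moreover the only triangles of $\partial(\mathcal H_1)$ are the $\{a,a',b\}$, and for such a triangle the two edges $ab,a'b$ both lie in the unique hyperedge $\{a,a',b\}$, so no three distinct hyperedges can cover it; thus $\mathcal H_1$ is Berge-$C_3$-free. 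Consequently $\mathcal H_1$ is a dense Berge-$G$-free $3$-graph whenever (2) fails, or whenever $G$ contains a triangle. The second construction $\mathcal H_2$ takes $K_{A,B}$ together with $r:=\tau(G)-1$ further vertices $d_1,\dots,d_r$ and hyperedges $\{a,b,d_j\}$ for all $a\in A,\ b\in B,\ j\in[r]$, where $\tau(G)$ is the least number of vertices whose deletion makes $G$ bipartite. Deleting $d_1,\dots,d_r$ leaves $K_{A,B}$, so $\tau(\partial(\mathcal H_2))\le r<\tau(G)$ and hence $G\not\subseteq\partial(\mathcal H_2)$; this handles the remaining failure of (1), namely triangle-free $G$ with $\tau(G)\ge2$, and in particular is consistent with Theorem~\ref{thm:general} for every $G$ with $\chi(G)\ge4$. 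Both shadows have density $\to\tfrac12>0$, completing this direction.

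\textbf{Sufficiency (embedding).} Assume (1) and (2), and let $\mathcal H$ have $\ge\epsilon\binom{n}{2}$ shadow edges, hence $\Omega(\epsilon n^2)$ hyperedges. Using (2) write $V(G)=X\sqcup Y$ with $Y$ independent and $G[X]$ a matching $M$; then $G$ is a bipartite graph between $X$ and $Y$ together with the constant-size matching $M$ inside $X$. Apply Szemer\'edi regularity to $\partial(\mathcal H)$ to obtain a regular pair $(S,T)$ of density $\ge\epsilon/2$ with $|S|,|T|\to\infty$; a counting/blow-up argument embeds the bipartite part of $G$ (with $X\hookrightarrow S$, $Y\hookrightarrow T$), each crossing edge being supported by a witnessing hyperedge. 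It remains to realize the within-$S$ matching $M$, and here I split on where the witnessing third vertices lie. If a positive fraction of the crossing hyperedges have their third vertex inside $S\cup T$, then --- since any hyperedge inside $S\cup T$ has two vertices on one side --- I greedily harvest a linear-size matching of within-side pairs each covered by a hyperedge, into which the constant matching $M$ embeds. Otherwise the third vertices concentrate, producing a vertex of shadow-degree $\Omega(n)$; invoking (1), write $G=G'\cup\{v\}$ with $G'$ bipartite, place $v$ at this high-degree apex and route all of $G$'s odd cycles through it. In either case a short system-of-distinct-representatives argument, using that $|E(G)|$ is constant while the available hyperedges are plentiful, selects distinct hyperedges for all edges of $G$, yielding a Berge-$G$.

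\textbf{Main obstacle.} The delicate point is the dichotomy in the sufficiency proof and its uniformity in $\epsilon$: one must prove that scarcity of within-side hyperedge support genuinely forces an apex-like vertex --- i.e.\ that a dense $3$-graph admitting no large within-side matching must funnel its odd (triangle) structure through boundedly many vertices --- so that, for a $G$ satisfying both (1) and (2), exactly one of the two embeddings above always succeeds. This is precisely where both hypotheses are used, mirroring the two constructions $\mathcal H_1,\mathcal H_2$: condition (2) makes the non-bipartite part a one-sided matching that the first regime can absorb, while condition (1) makes it a single transversal vertex that the apex regime can absorb. Managing this case analysis together with the distinctness bookkeeping for the $|E(G)|$ hyperedges is the crux of the argument.
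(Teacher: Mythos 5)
Your necessity direction is essentially the paper's: your $\mathcal{H}_1$ is their $\cH_1$ with the roles of the two sides swapped, and your $\mathcal{H}_2$ with $r=1$ is their $\cH_2$; the arguments (Berge-$C_3$-freeness of the first, and $G\not\subseteq\partial(\cH_2)$ when no single vertex makes $G$ induced-bipartite) are fine.

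The sufficiency direction, however, has a genuine gap exactly at the dichotomy you yourself flag as the crux. The negation of ``a positive fraction of crossing hyperedges have their third vertex inside $S\cup T$'' only yields, by pigeonhole, a vertex $z$ outside $S\cup T$ lying in $\Omega(n)$ crossing hyperedges; it does \emph{not} yield a vertex with many shadow-neighbors in \emph{both} $S$ and $T$, which is what the apex embedding of condition (1) needs (all of $z$'s hyperedges could share one fixed vertex of $S$, leaving $z$ with a single $S$-neighbor). Such lopsided vertices also do not hand back the within-side matching needed for the condition-(2) embedding, so neither branch of your case analysis is guaranteed to apply; your summary claim that scarcity of within-side support ``forces the odd structure through boundedly many vertices'' is false as stated. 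The paper pivots the dichotomy differently: after preparing sets $C\subseteq X'$, $D\subseteq Y'$ and a gadget $A,B$ via Lemma~\ref{lem:clique-large-neighbor}, either some $z\in Z$ has at least $2s$ neighbors in both $C$ and $D$ (Claim~\ref{cl:1p4clus}, producing a Berge-$C_5(1,s,s,s,s)$, into which every $G$ satisfying (1) and (2) embeds), or every $z$ is deficient on one side, whence one of $e(Z_1,D),e(Z_2,C)$ is $\Omega(n^2)$ and K\H{o}v\'ari--S\'os--Tur\'an plus an iterative extraction of matching edges with large common hyperedge-neighborhoods gives a Berge copy of any triangle-free subgraph of $C_3(s,s,s;\{\{1,2\}\})$ (Claim~\ref{cl:matching3clus}). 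Your first branch also underestimates the distinctness bookkeeping: a within-side pair $\{s_1,s_2\}$ covered by a single hyperedge $\{s_1,s_2,t\}$ cannot simultaneously supply hyperedges for the matching edge and for the crossing edges at $s_1,s_2$, so the matched pairs cannot be harvested arbitrarily; the paper secures this via an edge-minimality reduction, a random tripartition isolating uniquely embedded $X$--$Y$ pairs, and the requirement that each extracted matching edge have $(2s)^{j}$ common hyperedge-witnesses in $D$. These steps are not replaceable by a generic system-of-distinct-representatives argument.
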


\begin{corollary}\label{cor:degenerate-characterize1}
Given a simple graph $G$, $\td_3(G) = 0$ if and only if $G$ is contained in both $C_5(1,s,s,s,s)$ and $C_3(s,s,s;\{\{1,2\}\})$ for some positive integer $s$.
\end{corollary}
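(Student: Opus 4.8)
The plan is to derive Corollary~\ref{cor:degenerate-characterize1} from Theorem~\ref{thm:degenerate} by showing that each host graph in the corollary encodes exactly one of the two conditions of the theorem; that is, I will prove
\[
\text{(1)} \iff \big(\exists s:\ G\subseteq C_5(1,s,s,s,s)\big), \qquad \text{(2)} \iff \big(\exists s:\ G\subseteq C_3(s,s,s;\{\{1,2\}\})\big).
\]
Both blowup families are monotone in $s$ (a smaller blowup embeds into a larger one), so after establishing the two equivalences separately I may replace the two witnessing parameters by their maximum and obtain a single $s$ for which $G$ embeds into both graphs. Hence $\td_3(G)=0$ iff both (1) and (2) hold iff $G$ is simultaneously a subgraph of $C_5(1,s,s,s,s)$ and $C_3(s,s,s;\{\{1,2\}\})$.

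For condition (2) I begin with the structural observation that deleting the matching between $I_1$ and $I_2$ in $C_3(s,s,s;\{\{1,2\}\})$ leaves the complete bipartite graph $K_{2s,s}$ with parts $I_1\cup I_2$ and $I_3$, while the deleted edges form a matching inside the single part $I_1\cup I_2$ --- exactly the shape demanded by (2). For the backward direction, given $G\subseteq C_3(s,s,s;\{\{1,2\}\})$ I let $B$ be the subgraph of $G$ consisting of its edges between $(I_1\cup I_2)\cap V(G)$ and $I_3\cap V(G)$, so that $E(G)\setminus E(B)$ consists of matching edges inside one part. For the forward direction, given a bipartite $B\subseteq G$ with parts $(X,Y)$ and $E(G)\setminus E(B)$ a matching $M'$ inside $X$, I send $Y\to I_3$, map the two ends of each edge of $M'$ to a matched pair across $I_1$ and $I_2$, and place the remaining vertices of $X$ on distinct vertices of $I_1$; every edge of $B$ then lands in the complete bipartite part and every edge of $M'$ on a matching edge, so $G$ embeds whenever $s\ge |V(G)|$.

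For condition (1) I use that $C_5(1,s,s,s,s)$ is triangle-free (being a blowup of the triangle-free $C_5$) and that deleting its apex $u_1$ leaves the bipartite path-blowup $P_4(s,s,s,s)$ on $I_2,I_3,I_4,I_5$; the backward direction is then immediate, since a subgraph of a triangle-free graph is triangle-free and deleting the preimage of $u_1$ (or any vertex, if $u_1$ is unused) leaves a subgraph of a bipartite graph. The forward direction is the crux, and it is exactly where triangle-freeness is indispensable. Fixing $v$ with $G-v$ bipartite, I choose a bipartition $(P,Q)$ of $G-v$, set $N_P=N(v)\cap P$ and $N_Q=N(v)\cap Q$, and map $v\to u_1$, $N_P\to I_2$, $Q\setminus N_Q\to I_3$, $P\setminus N_P\to I_4$, $N_Q\to I_5$ (with $s\ge |V(G)|$ for injectivity). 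This respects the bipartition of $G-v$ because $I_2,I_4$ lie in one class of $P_4(s,s,s,s)$ and $I_3,I_5$ in the other, so no edge inside $P$ or inside $Q$ is ever demanded, and the apex meets only $I_2\cup I_5$, matching $N_P\cup N_Q$. The sole potentially bad configuration is an edge of $G-v$ from $N_P\subseteq I_2$ to $N_Q\subseteq I_5$, as $I_2$ and $I_5$ are non-adjacent columns; but such an edge would close a triangle with $v$, which is impossible. Every remaining edge of $G$ runs between consecutive columns ($I_2$--$I_3$, $I_3$--$I_4$, $I_4$--$I_5$) or from $u_1$ into $I_2\cup I_5$, all of which are present, so $G$ embeds. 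Combining the two equivalences with Theorem~\ref{thm:degenerate} and unifying $s$ as in the first paragraph completes the proof; the only point that needs genuine care is this last case analysis, where triangle-freeness must be invoked to rule out the one forbidden column pair.
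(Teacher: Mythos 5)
Your proposal is correct and follows the same route the paper intends: the paper derives Corollary~\ref{cor:degenerate-characterize1} directly from Theorem~\ref{thm:degenerate} by observing that condition (1) is equivalent to embeddability in $C_5(1,s,s,s,s)$ and condition (2) to embeddability in $C_3(s,s,s;\{\{1,2\}\})$, leaving the details as ``easy to see.'' You have simply written out those details (including the key use of triangle-freeness to exclude edges between the two neighborhoods of the apex, which mirrors the case analysis the paper gives for Corollary~\ref{cor:degenerate-characterize2}), and your argument is sound.
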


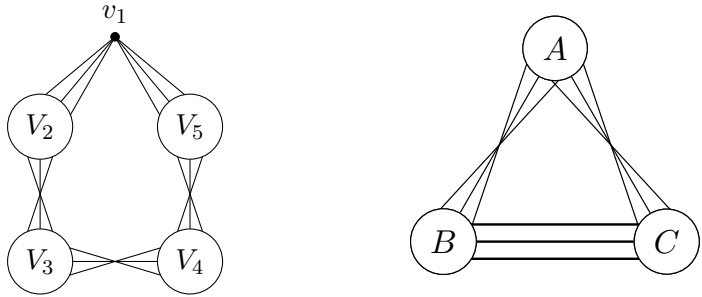
\begin{figure}[htb]
	\begin{center}
    \begin{minipage}{.2\textwidth}
		\resizebox{3cm}{!}{\begin{tikzpicture}[scale=1, Wvertex/.style={circle, draw=black, fill=white, scale=1}, bvertex/.style={circle, draw=black, fill=black, scale=0.3}]

\node [bvertex] (v1) at (0, 3) {};
\node [Wvertex] (v2) at (-1,1.8) {};
\node [Wvertex] (v3) at (-1,0) {};
\node [Wvertex] (v4) at (1,0 ) {};
\node [Wvertex] (v5) at (1,1.8) {};

\draw (v1) -- ($(v2) + (-0.2,0.2)$);
\draw (v1) -- (v2);
\draw (v1) -- ($(v2) + (0.2,-0.2)$);

\draw (v1) -- ($(v5) + (0.2,0.2)$);
\draw (v1) -- (v5);
\draw (v1) -- ($(v5) + (-0.2,-0.2)$);

\draw  ($(v2) + (-0.3,0)$) -- ($(v3) + (0.3,0)$);
\draw  (v2) -- (v3);
\draw  ($(v2)+(0.3,0)$) -- ($(v3) + (-0.3, 0)$);

\draw  ($(v4) + (-0.3,0)$) -- ($(v5) + (0.3,0)$);
\draw  (v4) -- (v5);
\draw  ($(v4)+(0.3,0)$) -- ($(v5) + (-0.3, 0)$);

\draw  ($(v3) + (0,0.3)$) -- ($(v4) + (0,-0.3)$);
\draw  (v3) -- (v4);
\draw  ($(v3)+(0, -0.3)$) -- ($(v4) + (0,0.3)$);

\node [bvertex, label=above:$v_1$] (v1) at (0, 3) {};
\node [Wvertex] (v2) at (-1,1.8) {$V_2$};
\node [Wvertex] (v3) at (-1,0) {$V_3$};
\node [Wvertex] (v4) at (1,0 ) {$V_4$};
\node [Wvertex] (v5) at (1,1.8) {$V_5$};

\end{tikzpicture}}
	\end{minipage}
    \hspace{2cm}
	\begin{minipage}{.2\textwidth}
		\resizebox{4cm}{!}{\begin{tikzpicture}[scale=1, Wvertex/.style={circle, draw=black, fill=white, scale=1}, bvertex/.style={circle, draw=black, fill=black, scale=0.3}]

\node [Wvertex] (v1) at (-1.3, 0) {$B$};
\node [Wvertex] (v2) at (1.3,0) {$C$};
\node [Wvertex] (v3) at (0,1.732*1.3) {$A$};

\draw[thick]  ($(v1)+(0,0.2)$) -- ($(v2) + (0,0.2)$);
\draw[thick] (v1) -- (v2);
\draw[thick] ($(v1)+(0,-0.2)$) -- ($(v2) + (0,-0.2)$);

\draw  ($(v1) + (-0.2,0.2)$) -- ($(v3) + (0.2,-0.2)$);
\draw  (v1) -- (v3);
\draw  ($(v1)+(0.2,-0.2)$) -- ($(v3) + (-0.2, 0.2)$);

\draw  ($(v2) + (-0.2,-0.2)$) -- ($(v3) + (0.2,0.2)$);
\draw  (v2) -- (v3);
\draw  ($(v2) + (0.2,0.2)$) -- ($(v3) + (-0.2, -0.2)$);

\node [Wvertex] (v1) at (-1.3, 0) {$B$};
\node [Wvertex] (v2) at (1.3,0) {$C$};
\node [Wvertex] (v3) at (0,1.732*1.3) {$A$};
\end{tikzpicture}}
	\end{minipage}
    \end{center}
    \caption{$C_5(1,s,s,s,s)$ and $C_3(s,s,s;\{\{1,2\}\})$}
    \label{fig:embed2}
\end{figure}

\begin{corollary}\label{cor:degenerate-characterize2}
Given a simple graph $G$, $\td_3(G) = 0$ if and only if $G$ is a subgraph of one of the graphs in Figure \ref{fig:embed3}.
\end{corollary}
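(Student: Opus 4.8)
The plan is to deduce Corollary~\ref{cor:degenerate-characterize2} from Corollary~\ref{cor:degenerate-characterize1} by showing that the two constraints appearing there --- being a subgraph of $C_5(1,s,s,s,s)$ \emph{and} of $C_3(s,s,s;\{\{1,2\}\})$ --- hold simultaneously precisely when $G$ embeds into one of the two graphs in Figure~\ref{fig:embed3}, which I expect to be the generalized blowups $G_1=C_5(1,s,s,s,1)$ (three consecutive vertices of the $5$-cycle blown up, the other two kept single and adjacent) and $G_2=C_5(1,s,s,s,s;\{\{4,5\}\})$ (the full blowup with the $V_4$--$V_5$ complete bipartite block thinned to a matching). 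The first step is the easy inclusion: I will exhibit colourings realising each of $G_1,G_2$ as a subgraph of both host graphs --- for $G_1$ take hub $V_2\cup V_4$ with the remaining classes $V_3\cup\{y\}$ and $\{x\}$ (matching edge $xy$), and for $G_2$ take hub $\{v\}\cup V_3$ with classes $V_2\cup V_4$ and $V_5$ (matching $V_4$--$V_5$). Since $\td_3$ is closed under taking subgraphs, every subgraph of $G_1$ or of $G_2$ then has $\td_3=0$, giving one direction.

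For the converse I will use the clean reformulation of Theorem~\ref{thm:degenerate}: condition (1) says $G$ is triangle-free with $G-w$ bipartite for some vertex $w$, and condition (2) is equivalent to the existence of an independent set $A$ with $\Delta(G-A)\le 1$ (a matching plus isolated vertices). Fix such a $w$, a bipartition $(X,Y)$ of $G-w$, and write $N_X=N(w)\cap X$, $N_Y=N(w)\cap Y$; by triangle-freeness $N(w)$ is independent, so there are no $N_X$--$N_Y$ edges. I split on whether $G$ is \emph{edge-bipartite}, i.e.\ whether $G-e$ is bipartite for a single edge $e$. If $\min(|N_X|,|N_Y|)\le 1$, deleting the unique edge from $w$ to the smaller side makes $G$ bipartite; thus $G$ is a bipartite graph plus one edge $pq$ inside a part, with $N(p)\cap N(q)=\emptyset$, and the assignment $p\mapsto x$, $q\mapsto y$, $N(p)\mapsto V_2$, $N(q)\mapsto V_4$, all other part-vertices $\mapsto V_3$ exhibits $G$ as a subgraph of $G_1$.

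The remaining case, $G$ not edge-bipartite, is the heart of the argument and where I expect the genuine difficulty. Here $|N_X|,|N_Y|\ge 2$, and I aim to embed $G$ into $G_2$ via $w\mapsto v$, $N_X\mapsto V_2$, $N_Y\mapsto V_5$, $Y\setminus N_Y\mapsto V_3$, $X\setminus N_X\mapsto V_4$. Checking this against the adjacencies of $G_2$, every edge class is automatically admissible \emph{except} the edges between $X\setminus N_X$ and $N_Y$, which must land in the thinned $V_4$--$V_5$ block and therefore have to form a matching. Establishing exactly this is the point where condition (2) is indispensable: I will argue that the bipartition can be chosen so that $A=\{w\}\cup(Y\setminus N_Y)$ serves as the hub, in which case $G-A$ is exactly the bipartite graph between $X$ and $N_Y$, and condition (2) forces it to be a matching. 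The main obstacle is coordinating the vertex $w$ supplied by condition (1) with the independent hub supplied by condition (2) --- a priori unrelated objects --- and using the non-edge-bipartite hypothesis together with triangle-freeness to rule out the two bad configurations (a vertex of $N_Y$ with two neighbours in $X\setminus N_X$, or the reverse) that would destroy the matching and hence the embedding. Once these are excluded, the two cases cover all graphs with $\td_3(G)=0$, and the dichotomy edge-bipartite/not-edge-bipartite matches $G_1$/$G_2$, completing the equivalence.
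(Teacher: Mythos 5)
There is a genuine gap, and it starts with the guess of what the two target graphs are. The graphs in Figure \ref{fig:embed3} are not $C_5(1,s,s,s,1)$ and $C_5(1,s,s,s,s;\{\{4,5\}\})$; they each have \emph{seven} classes (e.g.\ the first one consists of two adjacent single vertices $x,y$, blobs $P,T$ joined completely to $x$, a blob $Q$ joined completely to $y$, a blob $R$ joined completely to $P$ and $Q$, and a blob $S$ joined completely to $T$ and matched to $R$). This is not a cosmetic difference: your two candidate graphs are strictly too small, so the equivalence you are trying to prove is false. Concretely, take the graph $H$ just described with $|P|=|Q|=|R|=|S|=|T|=2$. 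It satisfies conditions (1) and (2) of Theorem \ref{thm:degenerate} (it is triangle-free, $H-x$ is bipartite, and with the bipartition $\bigl(\{x,y\}\cup R\cup S,\; P\cup Q\cup T\bigr)$ the leftover edges are the matching $\{xy\}\cup\{r_iu_i\}$ inside one part), so $\td_3(H)=0$. But $H$ contains the two edge-disjoint $5$-cycles $x\,y\,q_1\,r_1\,p_1$ and $x\,t_1\,u_2\,r_2\,p_2$, so it cannot be made bipartite by deleting one edge, hence $H\not\subseteq C_5(1,s,s,s,1)$ for any $s$; and a short case analysis on where the $5$-cycle $x\,y\,q_1\,r_1\,p_1$ could sit inside $C_5(1,s,s,s,s;\{\{4,5\}\})$ (every odd cycle there passes through the single vertex $v$ and uses exactly one matching edge) shows $H$ does not embed there either --- e.g.\ if $\phi(y)\in V_5$ then the two vertices $q_1,q_2\in N(y)$ would both have to map to the unique matching partner of $\phi(y)$.

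The same example pinpoints where your Case 2 argument breaks. You assert that the bipartition of $G-w$ supplied by condition (1) can be coordinated with condition (2) so that $A=\{w\}\cup(Y\setminus N_Y)$ is the hub and the $(X\setminus N_X)$--$N_Y$ edges form a matching. For $H$ above, $w=x$ is forced, the (unique) bipartition of $H-x$ is $\bigl(\{y\}\cup R\cup T,\; P\cup Q\cup S\bigr)$, so $N_X=\{y\}\cup T$ and $N_Y=P$, and the edges between $X\setminus N_X=R$ and $N_Y=P$ form a complete bipartite $K_{2,2}$, not a matching; the bipartite-plus-matching certificate from condition (2) uses an entirely different bipartition, in which $w=x$ is an endpoint of a leftover edge rather than a hub vertex. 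This mismatch between the two certificates is exactly the difficulty you flagged, and it is not resolvable with your two target graphs. The paper's proof instead starts from Corollary \ref{cor:degenerate-characterize1} and splits according to where the degree-$s$ apex $v_1$ of $C_5(1,s,s,s,s)$ lands inside the $C_3(s,s,s;\{\{1,2\}\})$ structure (in a matched part $B$ or $C$, versus in the hub $A$), refining each class by its adjacency to $v_1$; this is what produces the two seven-class graphs of Figure \ref{fig:embed3}. Your Case 1 (the ``bipartite plus one edge'' case, with the unassigned vertices of the second part sent to $V_2$) is essentially fine, but it covers only a small subfamily of the degenerate graphs.
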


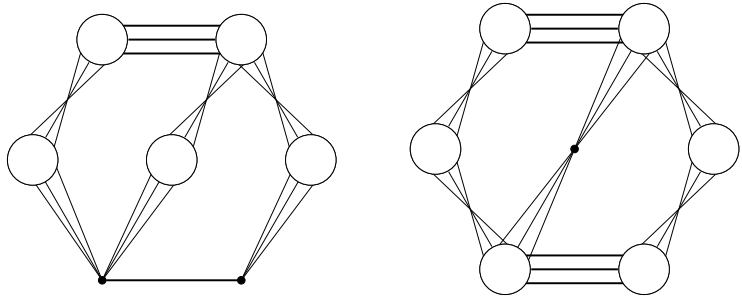
\begin{figure}[htb]
	\begin{center}
        \begin{minipage}{.2\textwidth}
        		\resizebox{4.5cm}{!}{\begin{tikzpicture}[scale=1, Wvertex/.style={circle, draw=black, fill=white, scale=2}, bvertex/.style={circle, draw=black, fill=black, scale=0.3}]

\node [bvertex] (v1) at (-1, -1.732) {};
\node [bvertex] (v2) at (1, -1.732) {};
\node [Wvertex] (v0) at (0,0) {};
\node [Wvertex] (v3) at (2,0) {};
\node [Wvertex] (v4) at (1,1.732) {};
\node [Wvertex] (v5) at (-1,1.732) {};
\node [Wvertex] (v6) at (-2,0) {};

\draw[thick] (v1)  -- (v2);
\draw (v1)-- ($(v0) + (-0.2,0)$);
\draw (v1)-- ($(v0) + (0,0)$);
\draw (v1)-- ($(v0) + (0.3,0)$);

\draw (v2)-- ($(v3) + (-0.2,0)$);
\draw (v2)-- ($(v3) + (0,0)$);
\draw (v2)-- ($(v3) + (0.3,0)$);

\draw (v1)-- ($(v6) + (-0.2,0)$);
\draw (v1)-- ($(v6) + (0,0)$);
\draw (v1)-- ($(v6) + (0.3,0)$);

\draw[thick]  ($(v5)+(0,0.2)$) -- ($(v4) + (0,0.2)$);
\draw[thick] (v5) -- (v4);
\draw[thick] ($(v5)+(0,-0.2)$) -- ($(v4) + (0,-0.2)$);

\draw  ($(v5)+(-0.2,0.2)$) -- ($(v6) + (0.2,-0.2)$);
\draw (v5) -- (v6);
\draw  ($(v5)+(0.2,-0.2)$) -- ($(v6) + (-0.2, 0.2)$);

\draw  ($(v0)+(-0.2,0.2)$) -- ($(v4) + (0.2,-0.2)$);
\draw (v0) -- (v4);
\draw  ($(v0)+(0.2,-0.2)$) -- ($(v4) + (-0.2, 0.2)$);

\draw  ($(v4)+(-0.2,-0.2)$) -- ($(v3) + (0.2,0.2)$);
\draw (v4) -- (v3);
\draw  ($(v4)+(0.2,0.2)$) -- ($(v3) + (-0.2, -0.2)$);

\node [Wvertex] (v0) at (0,0) {};
\node [Wvertex] (v3) at (2,0) {};
\node [Wvertex] (v4) at (1,1.732) {};
\node [Wvertex] (v5) at (-1,1.732) {};
\node [Wvertex] (v6) at (-2,0) {};
\end{tikzpicture}	}
        \end{minipage}
            \hspace{2cm}
        \begin{minipage}{.2\textwidth}
        		\resizebox{4.5cm}{!}{\begin{tikzpicture}[scale=1, Wvertex/.style={circle, draw=black, fill=white, scale=2}, bvertex/.style={circle, draw=black, fill=black, scale=0.3}]

\node [Wvertex] (v1) at (-1, -1.732) {};
\node [Wvertex] (v2) at (1, -1.732) {};
\node [bvertex] (v0) at (0,0) {};
\node [Wvertex] (v3) at (2,0) {};
\node [Wvertex] (v4) at (1,1.732) {};
\node [Wvertex] (v5) at (-1,1.732) {};
\node [Wvertex] (v6) at (-2,0) {};

\draw (v0)-- ($(v1) + (-0.2,0.2)$);
\draw (v0)-- ($(v1) + (0,0)$);
\draw (v0)-- ($(v1) + (0.2,-0.2)$);

\draw (v0)-- ($(v4) + (0.2,-0.2)$);
\draw (v0)-- ($(v4) + (0,0)$);
\draw (v0)-- ($(v4) + (-0.2,0.2)$);

\draw[thick]  ($(v5)+(0,0.2)$) -- ($(v4) + (0,0.2)$);
\draw[thick] (v5) -- (v4);
\draw[thick] ($(v5)+(0,-0.2)$) -- ($(v4) + (0,-0.2)$);

\draw[thick]  ($(v1)+(0,0.2)$) -- ($(v2) + (0,0.2)$);
\draw[thick] (v1) -- (v2);
\draw[thick] ($(v1)+(0,-0.2)$) -- ($(v2) + (0,-0.2)$);

\draw  ($(v5)+(-0.2,0.2)$) -- ($(v6) + (0.2,-0.2)$);
\draw (v5) -- (v6);
\draw  ($(v5)+(0.2,-0.2)$) -- ($(v6) + (-0.2, 0.2)$);

\draw  ($(v3)+(-0.2,0.2)$) -- ($(v2) + (0.2,-0.2)$);
\draw (v3) -- (v2);
\draw  ($(v3)+(0.2,-0.2)$) -- ($(v2) + (-0.2, 0.2)$);

\draw  ($(v4)+(-0.2,-0.2)$) -- ($(v3) + (0.2,0.2)$);
\draw (v4) -- (v3);
\draw  ($(v4)+(0.2,0.2)$) -- ($(v3) + (-0.2, -0.2)$);

\draw  ($(v6)+(-0.2,-0.2)$) -- ($(v1) + (0.2,0.2)$);
\draw (v6) -- (v1);
\draw  ($(v6)+(0.2,0.2)$) -- ($(v1) + (-0.2, -0.2)$);

\node [Wvertex] (v1) at (-1, -1.732) {};
\node [Wvertex] (v2) at (1, -1.732) {};
\node [Wvertex] (v3) at (2,0) {};
\node [Wvertex] (v4) at (1,1.732) {};
\node [Wvertex] (v5) at (-1,1.732) {};
\node [Wvertex] (v6) at (-2,0) {};

\end{tikzpicture}}
        \end{minipage}
    \end{center} 
    \caption{Characterization of $3$-degenerate graphs.}
    \label{fig:embed3}
\end{figure}

With Theorem \ref{thm:general-upper} and Theorem \ref{thm:degenerate}, we can then determine the cover Tur\'an density of all graphs when $k = 3$. The results are summarized in the following theorem.

\begin{theorem}\label{thm:cover-density-3}
Given a simple graph $G$,
    \[  \td_3(G) =   
        \begin{cases} 
         1-\frac{1}{\chi(G)-1}  & \textrm{if $\chi(G)\geq 4$,}\\
          0 & \textrm{if $G$ satisfies the condition in Theorem \ref{thm:degenerate},} \\
          \frac{1}{2} & \textrm{otherwise.}
       \end{cases}
    \]
\end{theorem}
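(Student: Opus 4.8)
The plan is to split on $\chi(G)$ together with degeneracy, reduce the first two branches to results already in hand, and concentrate the work on the remaining ``otherwise'' regime. If $\chi(G)\geq 4$, then $\chi(G)\geq k+1=4$ and Theorem \ref{thm:general} gives $\td_3(G)=1-\frac{1}{\chi(G)-1}$ at once. If $G$ satisfies conditions (1) and (2) of Theorem \ref{thm:degenerate}, then by definition $\td_3(G)=0$. For a $G$ in neither branch I first observe that $\chi(G)=3$: the case $\chi(G)\geq 4$ is the first branch, while a bipartite $G$ is $3$-degenerate (Remark after Proposition \ref{prop:uniform-versus-non}) and so lands in the second branch. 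For such $G$ the bound $\td_3(G)\leq\frac12$ is immediate from Theorem \ref{thm:general-upper} with $\chi(G)=3$, so the theorem reduces to proving the matching lower bound $\td_3(G)\geq\frac12$ for every non-degenerate $G$ with $\chi(G)=3$.

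For the lower bound I would exhibit, for each such $G$, a Berge-$G$-free $3$-graph whose shadow has $(\tfrac12+o(1))\binom n2$ edges, using two constructions. \textbf{Construction A:} on $V=A\sqcup B$ with $|A|=|B|=n/2$, fix a perfect matching $M_A$ of $A$ and take all triples $\{a,a',b\}$ with $aa'\in M_A$ and $b\in B$; writing $A=A_1\sqcup A_2$ so that $M_A$ runs between $A_1$ and $A_2$, the shadow $\partial(\cH_A)$ is the complete bipartite graph between $A$ and $B$ together with $M_A$, i.e. exactly the generalized blow-up $C_3(n/4,n/4,n/2;\{\{1,2\}\})$, with $(\tfrac12+o(1))\binom n2$ edges. \textbf{Construction B:} fix a vertex $w$, split the rest as $X\sqcup Y$ with $|X|=|Y|=(n-1)/2$, and take all triples $\{w,x,y\}$ with $x\in X$, $y\in Y$; its shadow is $K_{X,Y}$ plus the star at $w$, again with $(\tfrac12+o(1))\binom n2$ edges.

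I would then match each non-degenerate $G$ to a construction through three sub-cases. (i) If $G$ contains a triangle, Construction A is Berge-$C_3$-free: the only triangles in $\partial(\cH_A)$ are $\{a,a',b\}$ with $aa'\in M_A$, and both pairs $ab$ and $a'b$ can be covered only by the single hyperedge $\{a,a',b\}$, so the three edges of such a triangle cannot receive distinct hyperedges; since any Berge-$G$ contains a Berge-$C_3$, Construction A is Berge-$G$-free. (ii) If $G$ is triangle-free and fails condition (2), then unwinding condition (2) (equivalently Corollary \ref{cor:degenerate-characterize1}) $G$ is a subgraph of no $C_3(s,s,s;\{\{1,2\}\})$, hence not of $\partial(\cH_A)$; as any Berge-$G$ forces a copy of $G$ in the shadow, Construction A is again Berge-$G$-free. (iii) If $G$ is triangle-free and fails condition (1), then no single vertex deletion makes $G$ bipartite, and I use Construction B: every hyperedge contains $w$, so in any putative Berge-$G$ each edge of $G$ not incident to the vertex mapped to $w$ must have its endpoints on opposite sides of $X\sqcup Y$, forcing $G$ minus that vertex to be bipartite, a contradiction. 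Every non-degenerate $G$ with $\chi(G)=3$ falls into (i), (ii) or (iii), giving $\td_3(G)\geq\frac12$ and hence equality.

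The routine parts are the density computations and the reduction of the first two branches. The main obstacle is the Berge-freeness verification in the lower bound, namely translating each failure of a condition of Theorem \ref{thm:degenerate} into a genuine obstruction inside the matching construction: sub-cases (i) and (ii) rest on the two complementary facts that a Berge copy needs \emph{distinct} hyperedges yet still forces its underlying graph into the shadow, while sub-case (iii) hinges on the observation that routing every hyperedge through one common vertex confines all odd closed walks of a Berge copy through that single vertex, which is exactly what ``requiring two vertices to become bipartite'' forbids.
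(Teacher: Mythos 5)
Your proposal is correct and takes essentially the same route as the paper: the first two branches are dispatched by Theorems \ref{thm:general} and \ref{thm:degenerate}, the upper bound $\tfrac12$ comes from Theorem \ref{thm:general-upper}, and your Constructions A and B are isomorphic to the hypergraphs $\cH_1$ and $\cH_2$ of Section \ref{sec:lower-bound-1}, whose Berge-$G$-freeness under failure of conditions (1) or (2) is exactly the content of Claim \ref{cl:forward}. Your case analysis (i)--(iii) just spells out in more detail the argument the paper compresses into that claim.
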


For $3$-cover Tur\'an number, we also show the following proposition:

\begin{proposition}\label{prop:cover-number}
Let $G$ be a connected bipartite graph such that every edge is contained in a $C_4$ and every two vertices in the same part have a common neighbor.
Then 
$$\ex_3(n,G) = \Theta(ex(n,G)).$$
\end{proposition}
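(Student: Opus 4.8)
The plan is to establish the two bounds $\ex_3(n,G)=O(ex(n,G))$ and $\ex_3(n,G)=\Omega(ex(n,G))$ separately; throughout write $L:=|E(G)|$ and let $F_0:=\partial(\cH)$ denote the shadow of a Berge-$G$-free $3$-graph $\cH$. The backbone of the upper bound is a codegree dichotomy. Call an edge $uv\in F_0$ \emph{heavy} if its codegree $d(uv)$ is at least $L$, and \emph{light} otherwise. If $F_0$ contained a copy of $G$ all of whose edges are heavy, I could assign hyperedges to the edges of this copy one at a time: when an edge is processed at most $L-1$ hyperedges have already been used, while it lies in at least $L$ hyperedges, so a fresh representative is always available. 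The resulting $L$ distinct hyperedges form a Berge-$G$, a contradiction. Hence the subgraph of heavy edges is $G$-free and carries at most $ex(n,G)$ edges.

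It remains to bound the light edges, and this is where the hypotheses on $G$ are used. A single triple $\{a,b,c\}$ can cover two edges of a fixed copy of $G$ only if they form a cherry $a\textrm{--}b\textrm{--}c$ with $\{a,b,c\}\in E(\cH)$ (never three, since $G$ is triangle-free); such ``co-covering'' is exactly what can defeat a system of distinct representatives for a copy assembled from low-codegree edges. If the light subgraph had at least $C\cdot ex(n,G)$ edges for a large constant $C$, supersaturation would furnish a vast family of copies of $G$ inside it. I would then argue that, because every edge of $G$ lies in a $C_4$ and any two vertices in a common part have a common neighbour, these copies are flexible enough to avoid obstruction: a Hall/defect argument shows that either a copy already admits distinct representative hyperedges, or one of the co-covering cherries can be rerouted through the alternative vertex supplied by the common-neighbour condition, producing a genuine Berge-$G$. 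This forces the light part to carry $O(ex(n,G))$ edges, and combining with the heavy bound gives $\ex_3(n,G)=O(ex(n,G))$.

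For the matching lower bound the plan is to realize a near-extremal $G$-free graph as a shadow without creating any copy of $G$ at all. The clean mechanism is this: if $F$ is a $G$-free graph in which every edge lies in a triangle of $F$, let $\cH$ consist of one such triangle for each edge; then $\partial(\cH)=F$ exactly, and since $F$ is $G$-free its vertices can never realize the core of a copy of $G$, so $\cH$ is automatically Berge-$G$-free and $\ex_3(n,G)\ge |E(F)|$. Equivalently, one seeks a linear $3$-graph with $\Omega(ex(n,G))$ triples whose shadow is $G$-free. I would produce such an $F$ by starting from the standard finite-geometry/algebraic extremal $G$-free constructions and extracting from them a family of $\Omega(ex(n,G))$ edge-disjoint triangles whose union remains $G$-free; since the hypotheses force $ex(n,G)=\Omega(n^{3/2})$ together with a $C_4$-rich structure, only a constant factor is lost in this thickening.

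The principal obstacle is the light-edge step of the upper bound: converting an abundance of copies of $G$ in a low-codegree shadow into one honest Berge-$G$ despite the forced sharing of triples. This is precisely where both hypotheses are indispensable---the $C_4$ condition prevents a copy from being rigidly pinned so that adjacent edges are always co-covered by a single triple, while the common-neighbour condition supplies the alternative cherries needed to repair any Hall violation. Making this rerouting quantitative, through a supersaturation count that guarantees enough near-disjoint unobstructed copies, is the step I expect to require the most care; by contrast, once a triangle-rich extremal template is in hand, the lower-bound construction is essentially bookkeeping.
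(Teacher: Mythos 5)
Both halves of your argument have genuine gaps, and in both cases the difficulty you set up for yourself is avoidable. For the upper bound, the heavy/light dichotomy correctly shows that the heavy subgraph is $G$-free (the greedy assignment of distinct representatives works verbatim), but the light-edge step --- the one you yourself flag as needing the most care --- is only a sketch: ``supersaturation plus a Hall/defect rerouting'' is an idea, not an argument, and nothing in the proposal quantifies how many copies of $G$ survive the co-covering obstructions, nor why the repaired copies still use $|E(G)|$ \emph{distinct} triples. The paper's upper bound needs none of this and none of the hypotheses on $G$: it is exactly Proposition~\ref{prop:uniform-versus-non} (shrink each triple to a random pair; in expectation a $\binom{2}{2}/\binom{3}{2}=1/3$ fraction of the shadow survives, so a shadow with more than $3\,ex(n,G)$ edges yields a $2$-graph containing $G$, which lifts to a Berge-$G$). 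In particular the bound on light edges that you are labouring toward already follows from this one-line argument, so the entire second half of your upper-bound plan is superfluous as well as incomplete.

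The lower bound is where the more serious gap sits. Your mechanism (a $G$-free graph $F$ with every edge in a triangle, one triangle per edge, so $\partial(\cH)=F$ is $G$-free) is sound \emph{if} such an $F$ with $\Omega(ex(n,G))$ edges exists, but you give no construction: for a general bipartite $G$ in this class (e.g.\ $K_{s,t}$ with $s,t\ge 4$) neither the order of $ex(n,G)$ nor the structure of any near-extremal $G$-free graph is known, so there is no ``finite-geometry template'' to thicken, and even for $K_{3,3}$ it is not clear the known constructions have every edge in a triangle. Tellingly, your lower bound never invokes the two hypotheses on $G$, which is precisely where the paper needs them. The paper instead takes a bipartite $G$-free subgraph $H'=A\cup B$ of an extremal graph with at least $\frac{1}{2}ex(n,G)$ edges, splits each $a\in A$ into $a_1,a_2$, and replaces each edge $ab$ by the triple $\{a_1,a_2,b\}$. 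A Berge-$G$ there would have to map some edge of $G$ onto a pair $\{a_1,a_2\}$ (impossible: $a_1a_2$ lies in no $C_4$ of the shadow, yet every edge of $G$ lies in a $C_4$), or map two same-part vertices of $G$ to $a_1,a_2$ (impossible: their common neighbour $w$ would require two distinct triples containing $a_1w$ and $a_2w$, and the construction provides only one), or else it projects to a copy of $G$ in the $G$-free graph $H'$. This uses only the hypotheses on $G$ and requires no structural knowledge of extremal $G$-free graphs; you would need to either adopt this construction or supply a proof that triangle-rich near-extremal $G$-free graphs exist.
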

\begin{proof}
The fact that $\ex_3(n,G) = O(ex(n,G))$ is a consequence of Proposition \ref{prop:uniform-versus-non}. For the lower bound, consider an extremal $G$-free graph $H$ with $ex(n,G)$ edges. It follows that there is a bipartite subgraph $H'= A\cup B$ of $H$ which is $G$-free and contains at least $\frac{1}{2}ex(n,G)$ edges. We then construct a $3$-graph $\cH$ as follows. For each $a\in A$, replace $a$ with two new vertices $a_1, a_2$. The vertex set $B$ remains the same. For each $e = \{a,b\} \in E(H')$ with $a \in A$, $b\in B$, we have a hyperedge $\{a_1, a_2, b\}$ in $\cH$. We claim that $\cH$ contains no Berge-$G$. Indeed, if there is any Berge-$G$ in $\cH$, then one of the following two cases must happen:
\begin{description}
    \item Case 1: An edge in $G$ is mapped to $\{a_1, a_2\}$ for some $a \in A$. However, note that there is no $C_4$ containing $a_1 a_2$ in $\partial(\cH)$ while every edge of $G$ is contained in a $C_4$. This gives us a contradiction.
     \item Case 2: Two vertices of $G$ from the same part are mapped to $\{a_1, a_2\}$ for some $a \in A$. In this case, by our assumption, $a_1, a_2$ have a common neighbor $w$ in $G$. However, there are no two distinct hyperedges embedding $a_1w , a_2 w$ by our construction. Contradiction.
\end{description}
Hence it follows that $\cH$ is Berge-$G$-free and has $\Omega(ex(n,G))$ hyperedges.
\end{proof}

\begin{remark}
We give a class of graphs satisfying the conditions in Proposition \ref{prop:cover-number}. Let $B = B_1\cup B_2$ be an arbitrary connected bipartite graph with minimum degree $2$ such that each part has a vertex that is adjacent to all the vertices in the other part. It's easy to check that $B$ satisfies the conditions in  Proposition \ref{prop:cover-number}.
\end{remark}

Using Proposition \ref{prop:cover-number}, we have the following corollary on the asymptotics of the cover Tur\'an number of $K_{s,t}$.
\begin{corollary}
For positive integers $t\geq s\geq 2$, we have
$$\ex_3(n,K_{s,t}) = \Theta(ex(n,K_{s,t})).$$
\end{corollary}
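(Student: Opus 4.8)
The plan is to observe that this corollary is an immediate application of Proposition \ref{prop:cover-number}: it suffices to verify that $K_{s,t}$ satisfies all three hypotheses of that proposition whenever $t \geq s \geq 2$. Since $t \geq s \geq 2$ forces both parts of $K_{s,t}$ to have size at least $2$, I expect each condition to follow from a one-line structural observation.

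First I would check that $K_{s,t}$ is a connected bipartite graph; this is immediate, with bipartition into the two sides $A$ (of size $s$) and $B$ (of size $t$), and connectivity following from the fact that every vertex of $A$ is joined to every vertex of $B$ and both sides are nonempty. Next I would verify that every edge lies in a $C_4$. Given an edge $\{a,b\}$ with $a \in A$ and $b \in B$, the assumption $s \geq 2$ supplies a second vertex $a' \in A$ and the assumption $t \geq s \geq 2$ supplies a second vertex $b' \in B$; then $a\,b\,a'\,b'\,a$ is a $4$-cycle containing the edge $\{a,b\}$, since all four of the relevant cross-pairs are edges of the complete bipartite graph. Finally, for the common-neighbor condition, any two vertices in the same part — say $a,a' \in A$ — are each adjacent to every vertex of the opposite part $B$, which is nonempty, so they share a common neighbor; the same argument applies to any two vertices of $B$.

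Having confirmed the three hypotheses, I would conclude directly by Proposition \ref{prop:cover-number} that $\ex_3(n,K_{s,t}) = \Theta(ex(n,K_{s,t}))$. I do not anticipate a genuine obstacle here: the content of the corollary is entirely packaged inside Proposition \ref{prop:cover-number}, and the only point requiring care is the role of the hypothesis $t \geq s \geq 2$, which is exactly what guarantees that each part has at least two vertices and hence that the $C_4$-containment condition can be satisfied. (One could equally note that $K_{s,t}$ falls into the class described in the remark following Proposition \ref{prop:cover-number}, taking $B_1 = A$, $B_2 = B$, since in $K_{s,t}$ every vertex of one part is adjacent to all vertices of the other and the minimum degree is $\min(s,t) = s \geq 2$.)
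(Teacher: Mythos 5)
Your proposal is correct and matches the paper's approach exactly: the paper derives this corollary directly from Proposition \ref{prop:cover-number} (indeed, it offers no separate proof beyond citing that proposition), and your verification that $K_{s,t}$ with $t\geq s\geq 2$ is connected bipartite, has every edge in a $C_4$, and has a common neighbor for any two vertices in the same part is precisely the routine check the paper leaves implicit.
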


The following questions would be interesting for further investigations:
\begin{question}
Characterize all $k$-degenerate graphs or determine the \{$k$\}-cover Tur\'an density of all graphs for $k\geq 4$.
\end{question}

\begin{question}
Determine the asymptotics of the cover Tur\'an number of the $3$-degenerate graphs in Theorem \ref{thm:degenerate}.
\end{question}


\section{Proof of Theorem \ref{thm:general-upper}}

\begin{proof}[Proof of Theorem \ref{thm:general-upper}]
Let $k \geq 2$ and $G$ be a fixed graph with $\chi(G) \geq 2$. Let $\epsilon > 0$. Suppose $\cH$ is an edge-minimal $k$-uniform hypergraph on sufficiently large $n$ vertices such that 
$$\abs*{E(\partial(\cH))} \geq  \lp  1- \frac{1}{\chi(G)-1} + \epsilon \rp \binom{n}{2}.$$
Our goal is to show that $\cH$ contains a Berge copy of $G$.
For ease of reference, set $H = \partial(\cH)$. Let $M = k^2/\epsilon$. Let $H'$ be the subgraph of $H$ obtained by deleting all the edges $uv$ from $H$ with co-degree $d(\{u,v\}) \geq M$ in $\cH$. 

\begin{claim}
$\abs*{E(H')} \geq  \lp  1- \frac{1}{\chi(G)-1} + \epsilon/2 \rp \binom{n}{2}$.
\end{claim}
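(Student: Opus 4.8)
The plan is to bound the number of edges deleted in passing from $H$ to $H'$ by a simple averaging (Markov-type) argument, where the total co-degree is controlled through the number of hyperedges of $\cH$. The point is that deleting all pairs of co-degree at least $M$ costs at most a $\frac{1}{M}$-fraction of the total co-degree mass, and $M$ is chosen large enough to make this fraction smaller than $\epsilon/2$.

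First I would exploit edge-minimality to bound $|E(\cH)|$. Write $T = \lp 1 - \frac{1}{\chi(G)-1} + \epsilon \rp \binom{n}{2}$ for the threshold, so that $|E(\partial(\cH))| \geq T$. For each hyperedge $h \in E(\cH)$, minimality gives $|E(\partial(\cH - h))| < T \leq |E(\partial(\cH))|$, so deleting $h$ strictly decreases the shadow. Hence $h$ must contain a pair $\{u,v\}$ covered by no other hyperedge, i.e.\ a \emph{private} pair. Private pairs of distinct hyperedges are distinct, so $h \mapsto (\text{a private pair of } h)$ is an injection into $\binom{V}{2}$, giving $|E(\cH)| \leq \binom{n}{2}$.

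Next I would double-count incidences between pairs and hyperedges. Each $k$-edge covers exactly $\binom{k}{2}$ pairs, so $\sum_{\{u,v\} \in \binom{V}{2}} d(\{u,v\}) = \binom{k}{2}\,|E(\cH)| \leq \binom{k}{2}\binom{n}{2}$. Consequently the number of pairs $\{u,v\}$ with $d(\{u,v\}) \geq M$ is at most $\binom{k}{2}\binom{n}{2}/M$; these are precisely the edges deleted from $H$ to form $H'$. Substituting $M = k^2/\epsilon$ yields $\binom{k}{2}/M = \frac{(k-1)\epsilon}{2k} < \epsilon/2$, so fewer than $\frac{\epsilon}{2}\binom{n}{2}$ edges are deleted. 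Therefore $|E(H')| \geq |E(H)| - \frac{\epsilon}{2}\binom{n}{2} \geq \lp 1 - \frac{1}{\chi(G)-1} + \frac{\epsilon}{2} \rp \binom{n}{2}$, as claimed.

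The only real subtlety is the bound $|E(\cH)| \leq \binom{n}{2}$; everything after it is routine counting. I expect the private-pair argument from edge-minimality to be the crux, since it is what makes the averaging effective: without an a priori bound on the number of hyperedges, the total co-degree $\binom{k}{2}|E(\cH)|$ (and hence the count of high-co-degree pairs) could be arbitrarily large. Edge-minimality is exactly the hypothesis that forces $|E(\cH)|$ to be no larger than the number of pairs, closing the argument.
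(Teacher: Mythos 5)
Your proof is correct and follows essentially the same route as the paper: edge-minimality yields a private pair in each hyperedge, hence $|E(\cH)|\leq\binom{n}{2}$, and then the same double count of pair--hyperedge incidences bounds the number of deleted high-co-degree pairs by $\binom{k}{2}\binom{n}{2}/M<\frac{\epsilon}{2}\binom{n}{2}$. No gaps.
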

\begin{proof}
Let $L = E(H)\backslash E(H')$. By double counting, the number of hyperedges containing some edge in $L$ is at least $LM/\binom{k}{2}$. Since $\cH$ is assumed to be edge-minimal, it follows that every hyperedge $h$ contains a vertex pair that is only contained in $h$. Hence $|E(\cH)| \leq \binom{n}{2}$. It follows that 
$$LM/\binom{k}{2} \leq \abs{E(\cH)} \leq \binom{n}{2},$$
which implies that 
$$L \leq \frac{k^2}{2M} \binom{n}{2} \leq \frac{\epsilon}{2}\binom{n}{2}.$$
This completes the proof of the claim.
\end{proof}

Let $G'$ be the blowup of $G$ by a factor of $b = Mv(G)^2k$, i.e., $G' = G(b)$.
Suppose $V(G) = \{v_1, \ldots, v_s\}$ and $V_i$ is the blowed-up independent set in $G'$ that corresponds to $v_i$.
Recall the celebrated Erd\H{o}s-Stone-Simonovits theorem \cite{ES1,ES2}, which states that
for a fixed simple graph $F$, $ex(n,F) = \lp 1 -\frac{1}{\chi(F)-1}+o(1) \rp \binom{n}{2}$. Since $\chi(G') = \chi(G)$, it follows by the Erd\H{o}s-Stone-Simonovits theorem that for sufficiently large $n$, $H'$ contains $G'$ as a subgraph. 

Our goal is to give an embedding $f$ of $G$ into $G'$ so that $f(v_i) \in V_i$ for all  $1\leq i \leq s$ and every edge of $G$ is embedded in a distinct hyperedge in $\cH$. For ease of reference, set $L_j = \{v_1, \ldots, v_j\}$. For $1\leq t\leq s$ and $v\in V(G)$, set $N_t(v) = N_G(v) \cap L_t$.
For $i = 1$, just embed $v_1$ to an arbitrary vertex in $V_1$. Suppose that $v_1, \ldots, v_t$ are already embedded and edges in $G[L_t]$ are already embedded in distinct hyperedges. We now want to embed $v_{t+1}$ into an appropriate vertex in $V_{t+1}$, i.e., we want to find a vertex $u \in V_{t+1}$ such that there are distinct unused hyperedges embedding the edges from $u$ to $f(N_t(v_{t+1}))$. Note that each vertex $u$ in $V_{t+1}$ is adjacent to all vertices in $f(N_t(v_{t+1}))$ in $G'$. Let $S_t(u) = \{u\} \times f(N_t(v_{t+1}))$, i.e., $S_t(u)$ is the set of vertex pairs which contain $u$ and another vertex in $f(N_t(v_{t+1}))$.

Recall that $|V_{t+1}| = Mv(G)^2k$. At most $e(G)(k-2)$ vertices in $V_{t+1}$ are contained in hyperedges that are already used. For any of the remaining vertices $u \in V_{t+1}$, if there are no distinct hyperedges embedding all vertex pairs in $S_t(u)$, that means some hyperedge contains at least two vertex pairs $u w_1, u w_2$ in $S_t(u)$. Note that $d_{H'}(\{w_1, w_2\}) \leq M$ by the definition of $H'$. Thus the number of vertices $u\in V_{t+1}$ such that there exists some hyperedge containing at least two vertex pairs in $S_t(u)$ is at most 
$$ \binom{t}{2}M (k-2)  \leq \frac{Mv(G)^2k}{2}.$$
Since $|V_{t+1}| = Mv(G)^2k$, it follows that there exists some $u \in V_{t+1}$ such that $u$ is not contained in any hyperedge already used and there is no hyperedge containing at least two vertex pairs in $S_t(u)$.  It follows that there are distinct unused hyperedges containing all vertex pairs in $S_t(u)$. Set $f(v_{t+1})$ to be this $u$.

By induction, we can then conclude that $\cH$ contains a Berge copy of $G$. This completes the proof of Theorem \ref{thm:general-upper}.
\end{proof}


\section{Proof of Theorem \ref{thm:degenerate}}

\subsection{Regularity Lemma}
The proof of Theorem \ref{thm:degenerate} uses the Szemer\'edi Regularity Lemma. Given a graph $G$, and two disjoint vertex sets $X,Y \subseteq V(G)$, let $e(X,Y)$ denote the number of edges intersecting both $X$ and $Y$. Define $d(X,Y) = e(X,Y)/|X||Y|$ as the \emph{edge density} between $X$ and $Y$. $(X,Y)$ is called $\epsilon$-regular if for all $X'\subseteq X$, $Y' \subseteq Y$ with $|X'| \geq \epsilon |X|$ and $|Y'| \geq \epsilon |Y|$, we have $\abs{d(X,Y)-d(X',Y')} \leq \epsilon$. We say a vertex partition $V = V_0 \cup V_1 \cup \cdots \cup V_k$ \emph{equipartite} (with the \emph{exceptional set} $V_0$) if $|V_i| = |V_j|$ for all $i,j \in [k]$. The vertex partition $V = V_0 \cup V_1 \cup \cdots \cup V_k$ is said to be $\epsilon$-regular if all but at most $\epsilon k^2$ pairs $(V_j, V_j)$ with $1\leq i < j \leq k$ are $\epsilon$-regular and $|V_0| \leq \epsilon n$. The extremely powerful Szemer\'edi's regularity lemma states the following:
\begin{theorem}\cite{Szemeredi}\label{thm:regLemma}
For every $\epsilon$ and $m$, there exists $N_0$ and $M$ such that every graph $G$ on $n\geq N_0$ admits an $\epsilon$-regular partition $V_0 \cup V_1 \cup \cdots \cup V_k$ satisfying that $m\leq k\leq M$.
\end{theorem}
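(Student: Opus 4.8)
The plan is to prove Theorem~\ref{thm:regLemma} by the standard \emph{energy-increment} (or ``index'') argument: one attaches to each partition a bounded, refinement-monotone potential, shows that any partition failing to be $\epsilon$-regular can be refined so as to increase this potential by a fixed additive amount, and concludes that the refinement process must halt after boundedly many steps at an $\epsilon$-regular partition. Concretely, I would define the \emph{energy} (mean-square density) of an equipartition $\mathcal{P} = \{V_0, V_1, \ldots, V_k\}$ by
$$q(\mathcal{P}) = \sum_{1 \le i,\, j \le k} \frac{|V_i|\,|V_j|}{n^2}\, d(V_i, V_j)^2,$$
where, to handle the exceptional set, each vertex of $V_0$ is treated as its own singleton block and also counted. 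Since every density lies in $[0,1]$ and the weights $|V_i|\,|V_j|/n^2$ sum to at most $1$, we have $0 \le q(\mathcal{P}) \le 1$; this boundedness is what ultimately caps the number of iterations.

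The heart of the proof is two monotonicity facts. First, refinement never decreases energy: if $\mathcal{Q}$ refines $\mathcal{P}$ then $q(\mathcal{Q}) \ge q(\mathcal{P})$, because within each pair $(V_i, V_j)$ the density $d(V_i, V_j)$ is the weighted average of the densities of its sub-pairs, so by Cauchy--Schwarz (equivalently, that averaging cannot increase a mean square) the refined contribution dominates. Second, irregularity forces a quantitative energy jump: if a pair $(V_i, V_j)$ fails to be $\epsilon$-regular, witnessed by $V_i' \subseteq V_i$ and $V_j' \subseteq V_j$ with $|V_i'| \ge \epsilon |V_i|$, $|V_j'| \ge \epsilon |V_j|$ and $|d(V_i', V_j') - d(V_i, V_j)| > \epsilon$, then subdividing $V_i$ into $\{V_i', V_i \setminus V_i'\}$ and $V_j$ into $\{V_j', V_j \setminus V_j'\}$ raises the contribution of this pair by at least $\epsilon^4 \,|V_i|\,|V_j|/n^2$.

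Assembling these gives the global step. If $\mathcal{P}$ has $k$ nonexceptional parts and is not $\epsilon$-regular, then at least $\epsilon k^2$ of its pairs are irregular, so simultaneously subdividing each part along \emph{all} of its witnessing sets produces a refinement $\mathcal{P}'$ with $q(\mathcal{P}') \ge q(\mathcal{P}) + \epsilon^5$, where each part splits into at most $2^k$ pieces. Starting from any initial equipartition into $m$ parts and iterating this refinement, the energy can increase by $\epsilon^5$ at most $\lceil \epsilon^{-5} \rceil$ times before it would exceed $1$; hence after at most $\lceil \epsilon^{-5}\rceil$ rounds we reach an $\epsilon$-regular equipartition. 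The lower bound $k \ge m$ holds because we never coarsen, and the upper bound $k \le M$ for some $M = M(\epsilon, m)$ follows by iterating the size blowup $k \mapsto k\,2^k$ a bounded number of times, which is exactly the source of the tower-type dependence of $M$ on $\epsilon$.

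The main obstacle is the \emph{bookkeeping needed to keep the partition equitable} throughout. The witness sets used in the energy-increment step have unequal sizes, so each refinement destroys the equal-cardinality condition; after every round I must therefore re-cut all parts into blocks of a common small size and sweep the bounded leftovers into the exceptional set $V_0$. The delicate point is to verify that this cleanup keeps $|V_0| \le \epsilon n$ at every stage and costs only a negligible amount of energy, so that the clean increment bound $q(\mathcal{P}') \ge q(\mathcal{P}) + \epsilon^5$ still holds up to a harmless error. Choosing the block size at each round as an appropriate function of the current number of parts, and controlling the cumulative number of vertices ever deposited into $V_0$, is the technical core that makes the otherwise transparent potential argument rigorous.
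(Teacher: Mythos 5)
Your energy-increment argument is exactly the standard (and original) proof of the regularity lemma: the paper does not prove Theorem~\ref{thm:regLemma} itself but quotes it from \cite{Szemeredi}, and your sketch---the mean-square-density potential bounded by $1$, monotonicity under refinement via Cauchy--Schwarz, the $\epsilon^4\,|V_i|\,|V_j|/n^2$ jump from each irregular pair, the $\epsilon^5$ increment per round, and hence at most $\lceil\epsilon^{-5}\rceil$ rounds with the tower-type blowup $k\mapsto k\,2^k$ giving $M$---coincides with the canonical argument as presented, e.g., in the survey \cite{KSSS} that the paper also cites. You correctly isolate the only deferred step (re-equitizing after each round while keeping $|V_0|\le\epsilon n$ and losing negligible energy), and that bookkeeping is routine, so there is no genuine gap.
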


A $\epsilon$-regular pair satisfies the following simple lemma.
\begin{lemma}\label{lem:large-neighbor}
Suppose $(X,Y)$ is an $\epsilon$-regular pair of density $d$. Then for every $Y'\subseteq Y$ of size $|Y'| \geq \epsilon |Y|$, there exists less than $\epsilon |X|$ vertices in $X$ that have less than $(d-\epsilon)|Y'|$ neighbors in $Y'$.
\end{lemma}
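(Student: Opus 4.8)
The plan is to argue by contradiction via a direct double-count of edges between $X$ and $Y'$. First I would let $X' \subseteq X$ denote the set of vertices of $X$ having fewer than $(d-\epsilon)|Y'|$ neighbors in $Y'$, and suppose toward a contradiction that $|X'| \geq \epsilon|X|$. The purpose of this assumption is precisely to make $(X', Y')$ a pair to which the $\epsilon$-regularity hypothesis applies: we are given $|Y'| \geq \epsilon|Y|$, and the contradiction hypothesis supplies $|X'| \geq \epsilon|X|$.

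Next I would bound the density $d(X', Y')$ from above using the definition of $X'$. Since every vertex of $X'$ sends fewer than $(d-\epsilon)|Y'|$ edges into $Y'$, summing over $X'$ gives $e(X', Y') < |X'|(d-\epsilon)|Y'|$, and hence $d(X', Y') = e(X',Y')/(|X'|\,|Y'|) < d - \epsilon$. On the other hand, because $|X'| \geq \epsilon|X|$ and $|Y'| \geq \epsilon|Y|$, the $\epsilon$-regularity of $(X,Y)$ yields $\abs{d(X,Y) - d(X', Y')} \leq \epsilon$, and in particular $d(X', Y') \geq d(X,Y) - \epsilon = d - \epsilon$. These two bounds on $d(X',Y')$ are incompatible, so the assumption $|X'| \geq \epsilon|X|$ fails and we conclude $|X'| < \epsilon|X|$, which is exactly the assertion of the lemma.

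I do not expect a substantial obstacle: this is the standard "almost all vertices have large neighborhoods" consequence of regularity, and the entire argument is a one-line double count against the regularity inequality. The only point requiring slight care is the interplay of strict and non-strict inequalities, namely that the upper bound on $d(X',Y')$ is strict (each vertex of $X'$ has \emph{strictly} fewer than $(d-\epsilon)|Y'|$ neighbors) while the regularity lower bound is non-strict; it is this mismatch that closes the contradiction cleanly. I would also remark on the trivial degenerate case in which $(d-\epsilon)|Y'| \leq 0$, where $X'$ is empty and there is nothing to prove, but this does not affect the main line of reasoning.
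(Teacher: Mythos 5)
Your proposal is correct and is essentially identical to the paper's proof: both define $X'$ as the set of low-degree vertices, observe $d(X',Y') < d-\epsilon$ by double counting, and conclude from $\epsilon$-regularity that this forces $|X'| < \epsilon|X|$. The paper simply states this in one line, while you spell out the contradiction and the strict/non-strict inequality bookkeeping explicitly.
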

\begin{proof}
Let $Y'\subseteq Y$ with $|Y'| \geq \epsilon |Y|$. Let $X'$ be the set of vertices of $X$ that have less than $(d-\epsilon)|Y'|$ neighbors in $Y'$. Note that $d(X',Y') < (d-\epsilon)$, which can only happen if $|X'| < \epsilon|X|$.
\end{proof}

Using Lemma \ref{lem:large-neighbor}, we will show the following lemma using the standard embedding technique.

\begin{lemma}\label{lem:clique-large-neighbor}
Fix a positive integer $s$. Suppose $(X,Y)$ is an $\epsilon$-regular pair of density $d$ such that $\epsilon \leq 1/4s$, $(d-\epsilon)^s \geq 4\epsilon$ and $|X|,|Y|\geq 4s/(d-\epsilon)^s$. Then there exist disjoint subsets $A,C \subseteq X$ and $B,D \subseteq Y$ such that $|A| = |B| = s$, $|C|\geq \epsilon |X|$, $|D| \geq \epsilon |Y|$, and there is a complete bipartite graph connecting $A$ and $D$, $B$ and $C$ as well as $A$ and $B$.
\end{lemma}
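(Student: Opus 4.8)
The plan is to build the small sets $A\subseteq X$ and $B\subseteq Y$ greedily, one vertex at a time, by iterating Lemma \ref{lem:large-neighbor} so that at each step the common neighborhood of the vertices chosen so far shrinks by a factor of at most $(d-\epsilon)$; the large sets $C$ and $D$ will then simply be these common neighborhoods with the $s$-element sets $A$ and $B$ removed. Two preliminary observations make the iteration go through. First, since $d-\epsilon\leq 1$ we have $d-\epsilon\geq(d-\epsilon)^s\geq 4\epsilon>\epsilon$, so every intermediate common neighborhood we encounter has size at least $(d-\epsilon)^{s}|Y|\geq\epsilon|Y|$ (resp.\ $\geq\epsilon|X|$), which is exactly the hypothesis needed to invoke Lemma \ref{lem:large-neighbor}. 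Second, the $\epsilon$-regularity condition is symmetric in $X$ and $Y$, so Lemma \ref{lem:large-neighbor} also holds with the roles of $X$ and $Y$ interchanged.

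First I would choose $A=\{a_1,\dots,a_s\}\subseteq X$. Setting $Y^{(0)}=Y$, at step $i$ I apply Lemma \ref{lem:large-neighbor} to $Y^{(i-1)}$ (whose size is at least $(d-\epsilon)^{i-1}|Y|\geq\epsilon|Y|$) to learn that all but fewer than $\epsilon|X|$ vertices of $X$ have at least $(d-\epsilon)|Y^{(i-1)}|$ neighbors in $Y^{(i-1)}$; I pick $a_i$ to be any such vertex distinct from $a_1,\dots,a_{i-1}$, and set $Y^{(i)}=N(a_i)\cap Y^{(i-1)}$. A candidate exists because $(1-\epsilon)|X|-(s-1)>0$, using $|X|\geq 4s/(d-\epsilon)^s\geq 4s$ and $\epsilon\leq 1/4$. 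After $s$ steps, $D_0:=Y^{(s)}=\bigcap_i N(a_i)\cap Y$ is the common neighborhood of $A$ and satisfies $|D_0|\geq(d-\epsilon)^s|Y|$.

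Next I would choose $B=\{b_1,\dots,b_s\}\subseteq D_0$ by the symmetric procedure on the $X$-side, so that $B\subseteq D_0$ automatically forces the complete bipartite graph between $A$ and $B$. With $X^{(0)}=X$, at step $i$ I apply the symmetric form of Lemma \ref{lem:large-neighbor} to $X^{(i-1)}$ (again of size $\geq\epsilon|X|$) to find that fewer than $\epsilon|Y|$ vertices of $Y$ have fewer than $(d-\epsilon)|X^{(i-1)}|$ neighbors in $X^{(i-1)}$, and I pick $b_i\in D_0$ avoiding these bad vertices and the previously chosen $b_j$. Here the candidate count is $|D_0|-\epsilon|Y|-(i-1)\geq((d-\epsilon)^s-\epsilon)|Y|-(s-1)$, and the key inequality $((d-\epsilon)^s-\epsilon)|Y|\geq\tfrac34(d-\epsilon)^s|Y|\geq 3s$ (using $\epsilon\leq(d-\epsilon)^s/4$ and $|Y|\geq 4s/(d-\epsilon)^s$) keeps this positive. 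This yields $X^{(s)}=\bigcap_i N(b_i)\cap X$ with $|X^{(s)}|\geq(d-\epsilon)^s|X|$.

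Finally I would set $C=X^{(s)}\setminus A$ and $D=D_0\setminus B$. Disjointness is then immediate, and the containments $B,D\subseteq D_0$ and $C\subseteq X^{(s)}$ give the three required complete bipartite graphs (between $A$ and $B$, $A$ and $D$, and $B$ and $C$). The same arithmetic as above, namely $(d-\epsilon)^s|Z|-s\geq\epsilon|Z|$ for $Z\in\{X,Y\}$, certifies $|C|\geq\epsilon|X|$ and $|D|\geq\epsilon|Y|$. The main point to watch is the bookkeeping in the second phase: one must resist bounding $|D_0|$ by the weaker $4\epsilon|Y|$ and instead use the sharper $(d-\epsilon)^s|Y|$, since only the latter simultaneously guarantees enough admissible choices for every $b_i$ and the final size bound for $D$.
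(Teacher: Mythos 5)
Your proposal is correct and follows essentially the same route as the paper's proof: greedily embed $A$ by iterating Lemma \ref{lem:large-neighbor} so the common neighborhood in $Y$ shrinks by at most a factor of $(d-\epsilon)$ per step, then run the symmetric procedure inside $Y_s'$ to embed $B$, and take $C$ and $D$ to be the resulting common neighborhoods minus the embedded vertices. Your write-up is in fact somewhat more explicit than the paper's (which compresses the second phase into ``the process is entirely the same''), and your arithmetic verifying $(d-\epsilon)^s|Z|-s\geq\epsilon|Z|$ matches the paper's stated sufficient conditions.
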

\begin{proof}
Denote $A = \{a_1, \ldots, a_s\}$ and $B = \{b_1, \ldots, b_s\}$. For each $i\in [s]$, we will first embed $a_i$ to $X$ one vertex at a time. 
After embedding the $k^{\textrm{th}}$-vertex, we will show that the following condition is satisfied:
$$\abs*{Y\cap \ds\bigcap_{i=1}^k N(a_i)} \geq (d-\epsilon)^k|Y|.$$
The condition is trivially satisfied when $k = 0$. Suppose that we already embedded the vertices $a_1, \ldots, a_t$ for some $t > 0$. Let $Y_t' =  Y\cap \bigcap_{i=1}^t N(a_i)$. By induction, $|Y_t'| \geq (d-\epsilon)^t|Y| > \epsilon|Y|$. Hence by Lemma \ref{lem:large-neighbor}, at least $\lp(1-\epsilon)|X|-s\rp$ vertices in $X$ have at least $(d-\epsilon)|Y_t'|$ neighbors in $Y_t'$. Embed $a_{t+1}$ to one of these $\lp(1-\epsilon)|X|-s\rp$ vertices and it's easy to see that 
$$\abs*{Y\cap \ds\bigcap_{i=1}^{t+1} N(a_i)} \geq (d-\epsilon)|Y_t'| \geq (d-\epsilon)^{t+1}|Y|.$$
Now we want to embed $b_i$ to $Y_s'$ one vertex at a time. The process is entirely the same as long as $$(d-\epsilon)^{s}(|X|-s) \geq \epsilon|X|$$ and $$(d-\epsilon)^s|Y|-\epsilon|Y|-s\geq 1,$$
which are satisfied by our assumption on $d$, $|X|$ and $|Y|$.

\end{proof}

\subsection{Constructions for Theorem \ref{thm:degenerate}}\label{sec:lower-bound-1}
Before we prove Theorem \ref{thm:degenerate}, we first give two constructions and show that if $G$ does not satisfy the conditions $(1)$ and $(2)$ in Theorem \ref{thm:degenerate}, then at least one of the constructions do not contain a Berge copy of $G$. In particular, suppose $A, B$ are two disjoint set of vertices enumerated as $A = \{a_1, \dots, a_{n/2}\}$ and $B = \{b_1, \dots, b_{n/2}\}$. Let $\cH_1$ be a $3$-uniform hypergraph such that $V(\cH_1) = A \cup B$ and $E(\cH_1) = \{\{a_i, b_j, b_{j+1}\}: \textrm{ $j$ is odd}\}$. Let $\cH_2$ be a $3$-uniform hypergraph such that $V(\cH_2) = A \cup B$ and $E(\cH_2) = \{\{b_1, a_i, b_{j}\}: a_i \in A, b_j \in B \backslash \{b_1\}\}$. Observe that 
$$\lim_{n\to \infty} \frac{|E(\partial(\cH_1))|}{\binom{n}{2}}  = \ds\lim_{n\to\infty} \frac{|E(\partial(\cH_2))|}{\binom{n}{2}} = \frac{1}{2}.$$

\begin{claim}\label{cl:forward}
If $\td_3(G) = 0$, then condition $(1)$ and $(2)$ of Theorem \ref{thm:degenerate} must hold.
\end{claim}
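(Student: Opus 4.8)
The plan is to prove the claim directly, by extracting structural information from Berge copies of $G$ inside the two explicit constructions. Since $\td_3(G)=0$ while both $\partial(\cH_1)$ and $\partial(\cH_2)$ have edge density tending to $\tfrac12$, for every sufficiently large $n$ each of $\cH_1$ and $\cH_2$ \emph{must} contain a Berge-$G$: otherwise that construction would be a Berge-$G$-free $3$-graph witnessing $\ex_3(n,G)\ge(\tfrac12+o(1))\binom{n}{2}$, forcing $\td_3(G)\ge\tfrac12$. It therefore suffices to prove two structural lemmas, valid for every $n$: (A) if $\cH_1$ contains a Berge-$G$, then $G$ is triangle-free and admits a partition $V(G)=X\cup Y$ with $Y$ independent and $X$ spanning a matching (condition $(2)$ of Theorem~\ref{thm:degenerate}); and (B) if $\cH_2$ contains a Berge-$G$, then $G-w$ is bipartite for some vertex $w$. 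Granting these, a Berge-$G$ in $\cH_1$ yields triangle-freeness and condition $(2)$, a Berge-$G$ in $\cH_2$ yields a vertex $w$ with $G-w$ bipartite, and triangle-freeness together with the bipartite $G-w$ furnish condition $(1)$ (the subgraph $G-w$ is automatically induced). Thus both $(1)$ and $(2)$ hold.

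Lemma (B) is the easy half. Every hyperedge of $\cH_2$ contains the fixed vertex $b_1$, so in any Berge embedding $i,f$ I set $w=i^{-1}(b_1)$ when $b_1\in i(V(G))$, and take $w$ arbitrary otherwise. For any edge $xy\in E(G)$ with $x,y\neq w$, the hyperedge $f(xy)$ contains $i(x)$ and $i(y)$, neither of which is $b_1$; since every hyperedge of $\cH_2$ has the form $\{b_1,a_i,b_j\}$, its two non-$b_1$ vertices lie one in $A$ and one in $B\setminus\{b_1\}$, so $\{i(x),i(y)\}$ is split between $A$ and $B\setminus\{b_1\}$. Two-colouring $V(G)-w$ by membership in $A$ versus $B\setminus\{b_1\}$ is hence proper, so $G-w$ is bipartite.

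For Lemma (A) I first read off the shadow: $\partial(\cH_1)$ is the complete bipartite graph between $A$ and $B$ together with the matching $\{\{b_j,b_{j+1}\}:j\text{ odd}\}$ on $B$, with $A$ independent. Given a Berge-$G$ with embedding $i,f$, put $X=i^{-1}(B)$ and $Y=i^{-1}(A)$. No hyperedge contains two $A$-vertices, so $Y$ is independent. If an edge $xy$ has both endpoints in $X$, then $f(xy)$ is a hyperedge $\{a_i,b_{2l-1},b_{2l}\}$ whose only two $B$-vertices are $b_{2l-1},b_{2l}$, forcing $\{i(x),i(y)\}=\{b_{2l-1},b_{2l}\}$ to be that matched pair; since each vertex of $B$ lies in a unique matched pair, each $X$-vertex has at most one $X$-neighbour, so $X$ spans a matching, giving condition $(2)$. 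For triangle-freeness, any triangle of $G$ would restrict $f$ to a Berge-triangle in $\cH_1$; but a triangle of $\partial(\cH_1)$ must consist of some $a_i$ together with a matched pair $\{b_{2l-1},b_{2l}\}$, and both legs $\{a_i,b_{2l-1}\}$ and $\{a_i,b_{2l}\}$ lie in the single hyperedge $e_{i,l}=\{a_i,b_{2l-1},b_{2l}\}$ and in no other. A Berge-triangle would then require two distinct copies of $e_{i,l}$, contradicting that $f$ is a bijection; hence $G$ is triangle-free.

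The density reduction and Lemma (B) are routine, so the crux is Lemma (A). The delicate point is the triangle obstruction: it rests on the observation that a shadow edge $\{a_i,b\}$ with $b$ in a matched pair is covered by a \emph{unique} hyperedge, so the two legs of a would-be Berge-triangle collide, and on the injectivity of $f$. I would take care to confirm that the only triangles in $\partial(\cH_1)$ are of the claimed form and that the matched-pair structure is used correctly both to produce the matching on $X$ and to block triangles; once these two facts are secured, the remaining bookkeeping is immediate.
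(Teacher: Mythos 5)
Your proof is correct and follows essentially the same route as the paper: it uses the same two constructions $\cH_1,\cH_2$ of shadow density $\tfrac12$, deduces that each must contain a Berge-$G$ since $\td_3(G)=0$, and reads off condition $(2)$ and near-bipartiteness from the shadow structure while ruling out triangles via the fact that $\cH_1$ has no Berge triangle. The only difference is one of detail: you spell out the uniqueness of the hyperedge covering each leg $\{a_i,b_{2l-1}\}$, $\{a_i,b_{2l}\}$ to justify that $\cH_1$ contains no Berge triangle, a fact the paper simply asserts.
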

\begin{proof}
Suppose that $\td_3(G) = 0$. We claim that $(1)$ and $(2)$ must hold. 
First observe that $\cH_1$ contains no Berge triangle. Hence $G$ must be triangle-free otherwise $\cH_1$ is Berge-$G$-free. Now note that given a hypergraph $\cH$, if $\partial(\cH)$ is $G$-free, then $\cH$ must be Berge-$G$-free. Observe that $\partial(\cH_1)$ contains a bipartite subgraph $B \subseteq \partial(\cH_1)$ such that $E(\partial(\cH_1))-E(B)$ is a matching (possibly empty) in one of the partition of $B$.
Hence if there is no such bipartite subgraph in $G$, then $\partial(\cH_1)$ is $G$-free, implying that $\cH_1$ is Berge-$G$-free. Since $\td_3(G) = 0$, it follows that $G$ must satisfy condition $(2)$. Similarly, observe that $\partial(\cH_2)$ satisfies condition $(1)$. Hence if $G$ doesn't satisfy condition $(1)$, then $\cH_2$ is Berge-$G$-free, which contradicts that $\td_3(G) = 0$. Therefore we can conclude that $(1)$ and $(2)$ must hold for $G$.
\end{proof}

\subsection{Proof of Theorem \ref{thm:degenerate}}
The forward direction is proved in Claim \ref{cl:forward}.
It remains to show that if $G$ satisfies the conditions $(1)$ and $(2)$ in Theorem \ref{thm:degenerate}, then $\td_3(G) = 0$. Suppose not, i.e., $\td_3(G) \geq d$ for some $d > 0$. Our goal is to show that for every $3$-graph $\cH$ on (sufficiently large) $n$ vertices and at least $d \binom{n}{2}$ edges in $\partial(\cH)$,  $\cH$ contains a Berge copy of $G$. 

Assume first that $\cH$ is edge-minimal while maintaining the same shadow. It follows that in every hyperedge $h$ of $\cH$, there exists some $e \in \binom{h}{2}$ such that $e$ is contained only in $h$. Moreover, note that since each hyperedge covers at most $3$ edges in $\partial(\cH)$, we have that $$|E(\cH)| \geq \frac{1}{3}|E(\partial(\cH))| \geq \frac{d}{3}\binom{n}{2}.$$
Call an edge $e \in \partial(\cH)$ \emph{uniquely embedded} if there exists a unique hyperedge $h \in E(\cH)$ containing $e$. Now randomly partition $V(\cH)$ into three sets $X,Y,Z$ of the same size. Let $e(X,Y,Z)$ denote the number of hyperedges of $\cH$ intersecting each of the sets $X,Y,Z$ on at most one vertex. It's easy to see that 
$\textrm{E}[e(X,Y,Z)] = \frac{2}{9} |E(\cH)|$. Hence there exists a $3$-partite subhypergraph $\cH_1 = X\cup Y\cup Z$ of $\cH$ such that $|E(\cH_1)| \geq \frac{2}{9} |E(\cH)|$. Note that each hyperedge $h$ of $\cH_1$ contains some $e \in \binom{h}{2}$ that is uniquely embedded. Hence there are at least $\frac{2}{9}|E(\cH)|$ uniquely embedded edges in $\partial(\cH_1)$. Without loss of generality, assume that there are at least $\frac{2}{27}|E(\cH)|$ uniquely embedded edges between the vertex sets $X$ and $Y$ in $\partial(\cH_1)$. Let $\cH'$ be the subhypergraph of $\cH_1$ with only hyperedges containing a uniquely embedded edge between $X$ and $Y$. 

For ease of reference, let $H' = \partial(\cH')$ and let $H'[X\cup Y]$ be the subgraph of $\partial(\cH')$ induced by $X\cup Y$. Note that $H'[X\cup Y]$ is bipartite with at least $\frac{2}{27}|E(\cH)| \geq \frac{2d}{81}\binom{n}{2} = d'\binom{n}{2}$ edges.

Let $\epsilon = \epsilon(s,d'/2)$ be small enough so that $\epsilon$ satisfies the assumptions in Lemma \ref{lem:clique-large-neighbor}. Applying the regularity lemma on $H'[X\cup Y]$, we can find an $\epsilon$-regular partition in which there exist two parts $X'\subseteq X, Y'\subseteq Y$ such that $(X', Y')$ is an $\epsilon$-regular pair with edge density at least $d'/2$. Moreover, $|X'|,|Y'| \geq n/M$ for some constant $M > 0$. By Lemma \ref{lem:clique-large-neighbor}, we can find 
disjoint subsets $A,C \subseteq X'$ and $B,D \subseteq Y'$ such that $|A| = |B| = 2s$, $|C|\geq \epsilon|X'|$, $|D| \geq \epsilon |Y'|$, and there is a complete bipartite graph connecting $A$ and $D$, $B$ and $C$ as well as $A$ and $B$.

Now consider the subhypergraph $\hh =\cH'[C\cup D\cup Z]$ of $\cH'$ induced by the vertex set $C\cup D\cup Z$, i.e., all hyperedges in $\hh$ contain vertices only in $C\cup D\cup Z$. Given a vertex set $S \subseteq V(\hh)$, define $\hat{d}_S(v)$ as the number of neighbors of $v$ in $S$ in $\partial(\hh)$. 

\begin{claim}\label{cl:1p4clus}
If there exists some $z \in Z$ such that $\hd_C(v) \geq 2s$ and $\hd_D(v) \geq 2s$, then $\cH'$ contains a Berge-$C_5(1,s,s,s,s)$ as subhypergraph.
\end{claim}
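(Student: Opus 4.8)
The plan is to build an explicit Berge copy of $C_5(1,s,s,s,s)$ with $z$ playing the role of the unique degree-two vertex. Label the classes of $C_5(1,s,s,s,s)$ as $\{v_1\},V_2,V_3,V_4,V_5$, where $v_1$ is adjacent to every vertex of $V_2$ and of $V_5$, and the remaining edges form the complete bipartite graphs between $V_2,V_3$, between $V_3,V_4$, and between $V_4,V_5$ (see Figure~\ref{fig:embed2}). I would set $i(v_1)=z$, embed $V_2$ into $s$ neighbors of $z$ in $C$, embed $V_3$ into $s$ vertices of $B$, embed $V_4$ into $s$ vertices of $A$, and embed $V_5$ into $s$ neighbors of $z$ in $D$. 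All required adjacencies are then present in $\partial(\cH')$: the spokes $v_1$--$V_2$ and $v_1$--$V_5$ hold because $V_2,V_5$ are chosen among neighbors of $z$, while the edges between $V_2,V_3$, between $V_3,V_4$, and between $V_4,V_5$ are present because the pairs $(C,B)$, $(A,B)$, $(A,D)$ induce complete bipartite graphs by Lemma~\ref{lem:clique-large-neighbor}. As $\hd_C(z),\hd_D(z)\ge 2s$ and $|A|=|B|=2s$, there is room to choose $s$ distinct vertices for each class, and the five classes fall into the pairwise disjoint sets $\{z\},C,B,A,D$, so the vertex map is an injection.

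The substance of the argument is the Berge condition that the $3s^2+2s$ edges be embedded into pairwise \emph{distinct} hyperedges. The three internal classes of edges all join $X$ to $Y$: those between $V_2$ and $V_3$ lie in $C\times B$, those between $V_3$ and $V_4$ lie in $A\times B$, and those between $V_4$ and $V_5$ lie in $A\times D$. By the construction of $\cH'$, every $X$--$Y$ pair of $\partial(\cH')$ is uniquely embedded, and since a $3$-partite hyperedge contains exactly one $X$--$Y$ pair, distinct $X$--$Y$ edges are forced into distinct hyperedges. Hence the $3s^2$ internal edges receive distinct hyperedges with no further work.

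The two spoke classes are the delicate part, and I expect this to be the main obstacle: each spoke must be embedded in a hyperedge of $\hh$ containing $z$, and all such hyperedges share the vertex $z$, so collisions among them must be ruled out. This is exactly what the factor-two slack $\hd_C(z),\hd_D(z)\ge 2s$ buys. I would first pick $s$ neighbors $x_1,\dots,x_s$ of $z$ in $C$ for $V_2$, each witnessed by a hyperedge $\{x_i,y_{x_i},z\}\in\hh$ with $y_{x_i}\in D$; these are distinct since they have distinct $X$-vertices. They use up at most $s$ vertices of $D$, so, as $z$ has at least $2s$ neighbors in $D$, I can pick $s$ further neighbors $y_1,\dots,y_s$ of $z$ in $D$ for $V_5$ whose witnessing hyperedges have $Y$-vertex outside $\{y_{x_1},\dots,y_{x_s}\}$; since every spoke hyperedge contains $z$, distinctness of their $Y$-vertices makes all spokes pairwise distinct. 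Finally, no spoke can coincide with an internal hyperedge: every spoke has its $X$-vertex in $C$ and its $Y$-vertex in $D$, whereas each internal hyperedge has its $Y$-vertex in $B$ (for the edges between $V_2,V_3$ and between $V_3,V_4$) or its $X$-vertex in $A$ (for the edges between $V_4,V_5$), and $A,B,C,D$ are pairwise disjoint. With all $3s^2+2s$ edges embedded in distinct hyperedges, this produces a Berge-$C_5(1,s,s,s,s)$ in $\cH'$, as claimed.
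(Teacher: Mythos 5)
Your proposal is correct and follows essentially the same route as the paper's proof: embed $v_1$ at $z$, place $V_2,V_3,V_4,V_5$ in $C_z,B,A,D_z$ respectively, use the unique embedding of $X$--$Y$ pairs in $\cH'$ for the $3s^2$ internal edges, and spend the factor-two slack in $\hd_C(z),\hd_D(z)\ge 2s$ to keep the $V_5$-spoke hyperedges disjoint from the $V_2$-spoke ones. If anything, your explicit accounting of why spoke hyperedges cannot collide with internal ones (via the unique $X$--$Y$ pair in each $3$-partite hyperedge) is slightly more careful than the paper's wording.
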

\begin{proof}
Denote the $C_5(1,s,s,s,s)$ that we wish to embed as $\{v_1\} \cup V_2\cup V_3\cup V_4\cup V_5$. Let $v_1= z$. Let $C_z, D_z$ be the set of neighbors of $z$ in $C$ and $D$ respectively in $\partial(\hh)$. We wish to embed $V_2$ in $C_z$, $V_3$ in $B$, $V_4$ in $A$ and $V_5$ in $D_z$. 
Note that $|C_z|,|D_z|\geq 2s$ by our assumption. Pick arbitrary $s$ of them to be $V_2$. For each vertex pair $\{z,w\}$ where $w \in V_2$, there exists a hyperedge $h \subseteq C \cup D \cup Z$ containing $\{z,w\}$. Use $h$ to embed $\{z,w\}$. Observe that at most $s$ vertices in $D_z$ or $B$ are contained in hyperedges embedding the edges from $z$ to $V_2$. Since $|D_z|\geq 2s$, we can set $V_5$ to be arbitrary $s$ vertices among vertices in $D_z$ that are not contained in any hyperedge embedding the edges from $z$ to $V_2$. Similarly, since $|A|, |B| \geq 2s$, we can set $V_3$ and $V_4$ to be arbitrary $s$ vertices among vertices in $B$ and $A$ that are not contained in any hyperedge embedding the edges from $z$ to $V_2$ and from $z$ to $V_5$ respectively. We then have distinct hyperedges (in $\hh$ only) embedding the edges from $z$ to $V_2$ and $z$ to $V_5$, $V_2$ to $V_3$ and $V_4$ to $V_5$ respectively. Moreover, recall that by our choice of $X'$ and $Y'$, vertex pairs between $V_4$ and $V_5$ are uniquely embedded (with the third vertex in $Z$), i.e.,  there exist distinct hyperedges embedding them. Hence, we obtain a Berge-$C_5(1,s,s,s,s)$ in $\cH'$.
\end{proof}

Now observe that $|C|\geq \epsilon |X'|$, $|D| \geq \epsilon |Y'|$. Hence by the $\epsilon$-regularity of $(X',Y')$, the number of edges $e(C,D)$ in $\partial(\hh)$ satisfies that 
$$e(C,D) \geq (\frac{d'}{2}-\epsilon)|C||D| \geq (\frac{d'}{2}-\epsilon)\epsilon^2|X'||Y'|\geq (\frac{d'}{2}-\epsilon)\epsilon^2 \frac{n^2}{M^2} = cn^2$$
where $c$ is a constant depending on $\epsilon$ and $d'$.

\begin{claim}\label{cl:matching3clus}
If $\cH'$ contains no Berge-$C_5(1,s,s,s,s)$ as subhypergraph, it must contain a Berge-$F$ where $F$ is any triangle-free subgraph of $C_3(s,s,s;$ $\{\{1,2\}\})$.
\end{claim}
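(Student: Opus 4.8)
The plan is to exploit the structure left behind once Claim~\ref{cl:1p4clus} fails. Since $\cH'$ contains no Berge-$C_5(1,s,s,s,s)$, Claim~\ref{cl:1p4clus} tells us that no $z\in Z$ has both $\hd_C(z)\ge 2s$ and $\hd_D(z)\ge 2s$; equivalently, every $z\in Z$ is \emph{thin} on at least one side. Recall that each of the $\ge cn^2$ edges of $\partial(\hh)$ between $C$ and $D$ is uniquely embedded, hence carries a well-defined label $z\in Z$ whose triple $\{c,d,z\}$ is the unique hyperedge covering $cd$. First I would split these edges according to whether their label is $C$-thin or $D$-thin, and assume without loss of generality that at least $cn^2/2$ of them are $D$-thin. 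Because a $D$-thin label $z$ can lie in fewer than $2s$ triples sharing a fixed pair $(c,z)$, the shadow graph between $C$ and $Z$ then carries $\Omega(n^2)$ edges.

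Next I would fix an arbitrary triangle-free $F\subseteq C_3(s,s,s;\{\{1,2\}\})$, writing its three parts as an apex $P$ (complete to the other two) together with the matched pair $Q_1,Q_2$. The aim is to place $P$ inside $C$, one matched part inside $Z$, and the other inside $D$ (or, when convenient, into the gadget set $B$, whose complete $B$-$C$ bipartite graph supplies that connection for free). Applying a dependent-random-choice / K\H{o}v\'ari--S\'os--Tur\'an blow-up argument to the two dense bipartite graphs $C$--$Z$ and $C$--$D$, I would extract $\hat C\subseteq C$ with $|\hat C|=s$ whose common neighbourhoods $P_Z\subseteq Z$ and $P_D\subseteq D$ are both much larger than $s$; these provide the apex's two complete connections.

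To realise the $Q_1$--$Q_2$ matching I would find $s$ disjoint edges joining $P_Z$ to $D$ inside $\partial(\hh)$, and here the unique-embedding property is decisive. If no such matching of size $s$ existed, then by K\"onig's theorem there would be a vertex cover of size less than $s$, which would force many labels $z$ to funnel, for some fixed apex vertex $c^\ast$, through a single $d^\ast$; that is, the edge $c^\ast d^\ast$ would lie in several distinct hyperedges $\{c^\ast,d^\ast,z\}$, contradicting that $c^\ast d^\ast$ is uniquely embedded. With the apex, the matching, and the two common neighbourhoods in place, I would then embed the $O(s^2)$ edges of $F$ into pairwise distinct hyperedges one at a time. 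The only genuine obstruction is that a single triple $\{c,d,z\}$ can host just one edge of a Berge copy, since $cd$ is uniquely embedded; this is precisely what triangle-freeness of $F$ neutralises, because it guarantees we never need all three of $cd$, $cz$, $dz$ at once, so every required edge retains an unused hyperedge to occupy.

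The hard part will be the bookkeeping in this last step: coordinating the matched pairs with the apex's common neighbourhoods so that every edge of $F$ simultaneously receives its own hyperedge, in particular reconciling the matching's $D$- and $Z$-endpoints with the apex's complete connections. The two indispensable ingredients are the thinness coming from the absence of a Berge-$C_5(1,s,s,s,s)$ (which is what makes the $C$--$Z$ shadow dense enough to locate the apex) and the unique embedding of the $C$--$D$ edges (which both produces the matching and blocks the one fatal overlap); triangle-freeness of $F$ is exactly the hypothesis that downgrades the remaining potential conflict at a triple into a harmless one.
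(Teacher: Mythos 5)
Your opening moves match the paper's: from the failure of Claim~\ref{cl:1p4clus} every $z\in Z$ is thin on one side, and pushing the $\Omega(n^2)$ uniquely embedded $C$--$D$ edges through their labels yields a dense shadow bipartite graph incident to $Z$ (the paper takes $e(Z_1,D)=\Omega(n^2)$ where $Z_1$ is the $C$-thin part; you take $e(C,Z)=\Omega(n^2)$, the symmetric case). Your K\"onig argument for extracting the matching is also sound in principle, since two triples through a fixed pair $\{c^\ast,d^\ast\}$ really do contradict unique embedding. The genuine gap is the last step, exactly the one you defer as ``bookkeeping.'' Triangle-freeness of $F$ does \emph{not} guarantee that ``every required edge retains an unused hyperedge'': a hyperedge $\{c,d,z\}$ of $\hh$ covers three shadow edges, and while $F$ never needs all three, it can perfectly well need two of them, and one hyperedge can host only one edge of a Berge copy. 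With your placement (apex in $C$, matched parts in $Z$ and $D$) this bites concretely: if the matched pair $(z_i,d_i)$ is realized by the triple $\{c,d_i,z_i\}$ with $c\in\hat C$, then $cd_i$ is uniquely embedded, so $\{c,d_i,z_i\}$ is its \emph{only} covering hyperedge, and it may simultaneously be the only hyperedge covering the matching edge $z_id_i$; triangle-freeness merely forbids $cz_i$ from also being required, which still leaves two required edges competing for one triple. Your construction supplies no multiplicity for the $C$--$Z$ and $Z$--$D$ edges (each $z\in P_Z$ is guaranteed only \emph{one} triple per apex vertex), so there is no mechanism to break such ties. A secondary, more repairable issue: extracting a single $\hat C$ whose common neighbourhoods in \emph{both} $Z$ and $D$ are large needs a supersaturation count over the two different bipartite graphs, not just one application of dependent random choice.

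The paper's proof is organized precisely to manufacture the missing multiplicity, and it places the parts the other way around: the matched pairs $(x_i,y_i)$ live in $C\times Z_1$ and the apex part lives in $D$. The pairs are found iteratively inside a $K_{(2s)^{s+1},(2s)^{s+1}}$ between $D$ and $Z_1$ so that after $s$ steps there remain $2s$ vertices $d\in D$ with $\{x_i,y_i,d\}\in E(\hh)$ for \emph{every} $i$; some of these $d$'s serve as private hyperedges for the matching edges $x_iy_i$, while for an apex vertex $d$ the unique triple $\{x_i,y_i,d\}$ only ever has to cover one of $x_id$, $y_id$ --- and that is precisely what triangle-freeness of $F$, applied to the matched triple $x_i,y_i,d$, guarantees. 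To salvage your placement you would need an analogous step (many triples per matched pair, plus choosing $Q_1$ to avoid the third vertices of the $s^2$ unique apex--$Q_2$ triples); as written, the proposal does not close this.
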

\begin{proof}
By claim \ref{cl:1p4clus}, since $\cH'$ contains no Berge-$C_5(1,s,s,s,s)$ as subhypergraph, it follows that given any $v\in Z$, one of $\hd_C(v)$, $\hd_D(v)$ must be smaller than $2s$. Let $Z_1$ be the set of vertices $z \in Z$ with $\hd_C(v) < 2s$, and $Z_2$ be the set of vertices $z \in Z$ with $\hd_D(v) < 2s$. Let $e(Z_1, D)$ and $e(Z_2, C)$ denote the number of edges between $Z_1$ and $D$, $Z_2$ and $C$ respectively in $\partial(\hh)$. Since $e(C,D) \geq cn^2$ and all hyperedges in $\hh$ contains a vertex in $Z$, it follows that at least one of $e(Z_1, D)$ and $e(Z_2, C)$ must be at least $\Omega(n^2)$. WLOG, suppose $e(Z_1, D) \geq c'n^2$ for some $c' > 0$. Recall the classical result of K\H{o}v\'ari, S\'os and Tur\'an \cite{KST}, who showed that $ex(n, K_{r,t}) = O(n^{2-1/r})$ where $r\leq t$. By the Tur\'an number of complete bipartite graphs, we have that for sufficiently large $n$, $\partial(\hh)[D\cup Z_1]$ contains a complete bipartite graph $K_{(2s)^{s+1},(2s)^{s+1}}$. For ease of reference, call this complete bipartite graph $K$.

Let $F$ be an arbitrary triangle-free subgraph of $C_3(s,s,s;$ $\{\{1,2\}\})$. We now show that $\hh$ contains a Berge-$F$ subhypergraph. Let $C_1$ be the collection of vertices $v$ in $C$ such that there is some hyperedge containing $v$ and one of the edges in $K$.
Observe that for each $v\in C_1$, $\hd_{Z_1 \cap K}(v) \leq s$, otherwise we obtain a Berge-$C_5(1,s,s,s,s)$ in $\cH'$. Moreover, recall that for every $v \in Z_1$, $\hd_C(v) < 2s$.
It follows that there must be an edge $x_1 y_1 \in \partial(\hh)$ with $x_1 \in C_1, y_1\in Z_1$ such that at least $(2s)^{s}$ vertices in $D \cap K$ form a hyperedge containing $x_1y_1$. Now consider the subgraph  $K'$ of $K$ induced by these $(2s)^{s}$ vertices in $D\cap K$ as well as the non-neighbors of $x_1$ in $Z_1 \cap K$. Observe that $K'$ is also a complete bipartite graph with at least $(2s)^{s}$ vertices in each partition. Hence by the same logic, we can find another edge $x_2 y_2 \in \partial(\hh)$ with $x_2 \in C_1, y_2 \in Z_1$ such that at least $(2s)^{s-1}$ vertices in $D\cap K'$ form hyperedges containing $x_1 y_1$ and $x_2 y_2$ respectively. Continuing this process $s$ steps, it is not hard to see that we can find a Berge-$F$ subhypergraph in $\hh$.

In summary, if $\cH$ is $3$-graph with at least $d\binom{n}{2}$ edges in $\partial(\cH)$ for some $d> 0$ and $n$ sufficiently large, then $\cH$ contains  either a Berge-$C_5(1,s,s,s,s)$ or a Berge-$F$ where $F$ is any triangle-free subgraph of $C_3(s,s,s;\{\{1,2\}\})$. Moreover, observe that if $G$ satisfies the conditions $(1)$ and $(2)$ in Theorem \ref{thm:degenerate}, then $G$ is a subgraph of both $C_5(1,s,s,s,s)$ and $C_3(s,s,s;\{\{1,2\}\})$. Hence it follows that $\td_3(G) = 0$. This completes the proof of the theorem.

\end{proof}

It is easy to see that Theorem \ref{thm:degenerate} implies Corollary \ref{cor:degenerate-characterize1}. In the remaining of this section, we show that Corollary \ref{cor:degenerate-characterize1} and Corollary \ref{cor:degenerate-characterize2} are indeed equivalent.

\begin{proof}[Proof of Corollary \ref{cor:degenerate-characterize2}]
It suffices to show that a graph $G$ is contained in both $C_5(1,s,s,s,s)$ and $C_3(s,s,s;$ $\{\{1,2\}\})$ (for some $s$) if and only if $G$ is a subgraph of one of the graphs in Figure \ref{fig:embed3}. We follow the labelling in Figure \ref{fig:embed2}.
The backward direction is easy. For the forward direction, there are two cases:

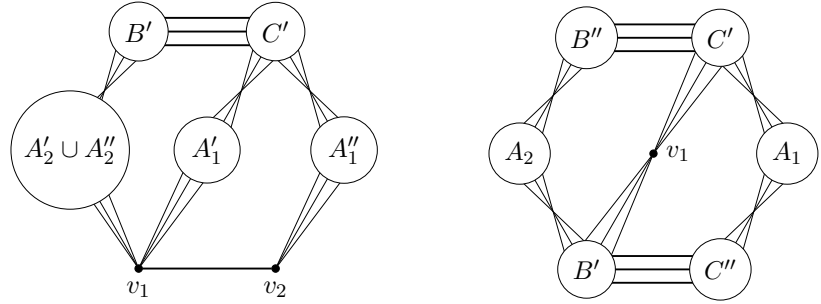
\begin{figure}[htb]
	\begin{center}
        \begin{minipage}{.2\textwidth}
        		\resizebox{5cm}{!}{\begin{tikzpicture}[scale=1, Wvertex/.style={circle, draw=black, fill=white, scale=1}, bvertex/.style={circle, draw=black, fill=black, scale=0.3}]

\node [bvertex, label=below:$v_1$] (v1) at (-1, -1.732) {};
\node [bvertex,label=below:$v_2$] (v2) at (1, -1.732) {};
\node [Wvertex] (v0) at (0,0) {};
\node [Wvertex] (v3) at (2,0) {};
\node [Wvertex] (v4) at (1,1.732) {};
\node [Wvertex] (v5) at (-1,1.732) {};
\node [Wvertex] (v6) at (-2,0) {};

\draw[thick] (v1)  -- (v2);
\draw (v1)-- ($(v0) + (-0.2,0)$);
\draw (v1)-- ($(v0) + (0,0)$);
\draw (v1)-- ($(v0) + (0.3,0)$);

\draw (v2)-- ($(v3) + (-0.2,0)$);
\draw (v2)-- ($(v3) + (0,0)$);
\draw (v2)-- ($(v3) + (0.3,0)$);

\draw (v1)-- ($(v6) + (-0.2,0)$);
\draw (v1)-- ($(v6) + (0,0)$);
\draw (v1)-- ($(v6) + (0.3,0)$);

\draw[thick]  ($(v5)+(0,0.2)$) -- ($(v4) + (0,0.2)$);
\draw[thick] (v5) -- (v4);
\draw[thick] ($(v5)+(0,-0.2)$) -- ($(v4) + (0,-0.2)$);

\draw  ($(v5)+(-0.2,0.2)$) -- ($(v6) + (0.2,-0.2)$);
\draw (v5) -- (v6);
\draw  ($(v5)+(0.2,-0.2)$) -- ($(v6) + (-0.2, 0.2)$);

\draw  ($(v0)+(-0.2,0.2)$) -- ($(v4) + (0.2,-0.2)$);
\draw (v0) -- (v4);
\draw  ($(v0)+(0.2,-0.2)$) -- ($(v4) + (-0.2, 0.2)$);

\draw  ($(v4)+(-0.2,-0.2)$) -- ($(v3) + (0.2,0.2)$);
\draw (v4) -- (v3);
\draw  ($(v4)+(0.2,0.2)$) -- ($(v3) + (-0.2, -0.2)$);

\node [Wvertex] (v0) at (0,0) {$A_1'$};
\node [Wvertex] (v3) at (2,0) {$A_1''$};
\node [Wvertex] (v4) at (1,1.732) {$C'$};
\node [Wvertex] (v5) at (-1,1.732) {$B'$};
\node [Wvertex] (v6) at (-2,0) {$A_2' \cup A_2''$};
\end{tikzpicture}}
        \end{minipage}
            \hspace{3cm}
        \begin{minipage}{.2\textwidth}
        		\resizebox{4.5cm}{!}{\begin{tikzpicture}[scale=1, Wvertex/.style={circle, draw=black, fill=white, scale=1}, bvertex/.style={circle, draw=black, fill=black, scale=0.3}]

\node [Wvertex] (v1) at (-1, -1.732) {};
\node [Wvertex] (v2) at (1, -1.732) {};
\node [bvertex, label=right:$v_1$] (v0) at (0,0) {};
\node [Wvertex] (v3) at (2,0) {};
\node [Wvertex] (v4) at (1,1.732) {};
\node [Wvertex] (v5) at (-1,1.732) {};
\node [Wvertex] (v6) at (-2,0) {};

\draw (v0)-- ($(v1) + (-0.2,0.2)$);
\draw (v0)-- ($(v1) + (0,0)$);
\draw (v0)-- ($(v1) + (0.2,-0.2)$);

\draw (v0)-- ($(v4) + (0.2,-0.2)$);
\draw (v0)-- ($(v4) + (0,0)$);
\draw (v0)-- ($(v4) + (-0.2,0.2)$);

\draw[thick]  ($(v5)+(0,0.2)$) -- ($(v4) + (0,0.2)$);
\draw[thick] (v5) -- (v4);
\draw[thick] ($(v5)+(0,-0.2)$) -- ($(v4) + (0,-0.2)$);

\draw[thick]  ($(v1)+(0,0.2)$) -- ($(v2) + (0,0.2)$);
\draw[thick] (v1) -- (v2);
\draw[thick] ($(v1)+(0,-0.2)$) -- ($(v2) + (0,-0.2)$);

\draw  ($(v5)+(-0.2,0.2)$) -- ($(v6) + (0.2,-0.2)$);
\draw (v5) -- (v6);
\draw  ($(v5)+(0.2,-0.2)$) -- ($(v6) + (-0.2, 0.2)$);

\draw  ($(v3)+(-0.2,0.2)$) -- ($(v2) + (0.2,-0.2)$);
\draw (v3) -- (v2);
\draw  ($(v3)+(0.2,-0.2)$) -- ($(v2) + (-0.2, 0.2)$);

\draw  ($(v4)+(-0.2,-0.2)$) -- ($(v3) + (0.2,0.2)$);
\draw (v4) -- (v3);
\draw  ($(v4)+(0.2,0.2)$) -- ($(v3) + (-0.2, -0.2)$);

\draw  ($(v6)+(-0.2,-0.2)$) -- ($(v1) + (0.2,0.2)$);
\draw (v6) -- (v1);
\draw  ($(v6)+(0.2,0.2)$) -- ($(v1) + (-0.2, -0.2)$);

\node [Wvertex] (v1) at (-1, -1.732) {$B'$};
\node [Wvertex] (v2) at (1, -1.732) {$C''$};
\node [Wvertex] (v3) at (2,0) {$A_1$};
\node [Wvertex] (v4) at (1,1.732) {$C'$};
\node [Wvertex] (v5) at (-1,1.732) {$B''$};
\node [Wvertex] (v6) at (-2,0) {$A_2$};

\end{tikzpicture}}
        \end{minipage}
    \end{center} 
    \caption{Equivalence of characterizations in Corollary \ref{cor:degenerate-characterize1} and \ref{cor:degenerate-characterize2}. }
    \label{fig:embed4}
\end{figure}

\begin{description}

\item Case 1: With loss of generality, $v_1$ is in $B$. 
Let $v_2\in C$ be the vertex matched to $v_1$. 
Let $B'=B\setminus \{v_1\}$, and $C'=C\setminus \{v_2\}$. Note that $G-v_1$ is a bipartite graph, i.e., $V(G)-v_1 = U_1 \cup U_2$. With loss of generality, we can assume
$B'\subseteq U_1$, $C'\subseteq U_2$ and $v_2 \in U_2$ by properly swapping two ends of the matching edges between $B$ and $C$ if needed.

Since $G- v_1$ is bipartite, the vertex set $A$ is partitioned into two parts $A_1 \subseteq U_1, A_2\subseteq U_2$. Let $A_1', A_2'$ be the neighbors of $v$ in $A_1, A_2$ respectively, $A_1'', A_2''$ be the non-neighbors of $v$ in $A_1, A_2$ respectively. Recall that $v_2\in U_2$. It follows that $v_2$ is independent with $A_2' \cup A_2''$. Moreover, since $G$ is triangle-free, $v_2$ is also independent with $A_1'$.  

It then follows that $G$ can be embedded into the first graph of Figure \ref{fig:embed3} in the same way labelled in Figure \ref{fig:embed4} (note that there are no edges between $v_1$ and $A_2''$).

\item Case 2: $v_1$ is in $A$. 
Since $G-v_1$ is bipartite, we can write $V(G)-v_1 = U_1 \cup U_2$.
WLOG, assume that $B\subseteq U_1$ and $C\subseteq U_2$ by properly swapping two ends of the matching edges between $B$ and $C$ if needed. Moreover, write $A = A_1 \cup A_2\cup \{v\}$ where $A_1 \in U_1$ and $A_2 \in U_2$. Write $B = B' \cup B''$, $C =C' \cup C''$ such that $B'$ and $C'$ are the neighbors of $v_1$ in $B$ and $C$ respectively. Since $G$ is triangle-free, it follows that $v_1$ is independent with $B''$ and $C''$.

It then follows that $G$ can be embedded into the second graph of Figure \ref{fig:embed3} in the same way labelled in Figure \ref{fig:embed4}.

\end{description}
\end{proof}

\begin{section}{Proof of Theorem \ref{thm:cover-density-3}}

If $\chi(G) \geq 4$, we are done by Theorem \ref{thm:general}.
If $\chi(G) \leq 3$ and $G$ is not degenerate, the two hypergraphs we constructed in Section \ref{sec:lower-bound-1} provide the lower bound $1/2$, which is also an upper bound by Theorem \ref{thm:general-upper}. Theorem \ref{thm:degenerate} resolves the case when $G$ is degenerate.

\end{section}


\end{document}